\documentclass{amsart}

\usepackage{lineno}
\usepackage{amsfonts}
\usepackage{tipa}
\usepackage{amssymb}
\usepackage{mathrsfs}
\usepackage{amsmath}
\usepackage{txfonts}
\newtheorem{theorem}{Theorem}

\newtheorem{lemma}{Lemma}
\newtheorem{claim}{Claim}
\newtheorem{step}{Step}
\newtheorem{case}{Case}
\newtheorem{subcase}{Case}[case]

\newtheorem{remark}{Remark}

\providecommand{\newproof}[2]{}

\usepackage[colorlinks=true]{hyperref}
\IfFileExists{BOONDOX-cal.sty}{\usepackage{BOONDOX-cal}}{\IfFileExists{boondox-cal.sty}{\usepackage{boondox-cal}}{}}
\hypersetup{urlcolor=red, citecolor=blue}

\begin{document}
\title[Symmetry of solutions to nonlocal Neumann problems]{Symmetry of solutions to nonlocal Neumann problems with the fractional Laplacian in half spaces}

\author{Baiyu Liu}
\address{School of Mathematics and Physics, University of Science and Technology Beijing, 30 Xueyuan Road, Haidian District, Beijing, 100083, P.R. China}
\email{liubymath@gmail.com; liuby@ustb.edu.cn}
\thanks{Corresponding author. Tel.: +86 13466547073. Project supported by the National Natural Science Foundation of China (No. 12471089).}

\author{Wenli Tan}
\address{School of Mathematics and Physics, University of Science and Technology Beijing, 30 Xueyuan Road, Haidian District, Beijing, 100083, P.R. China}

\subjclass[2020]{35R11, 35B07, 35B50}
\keywords{Fractional Laplacian, Nonlocal Neumann problem, Symmetry, Direct method of moving planes}

\begin{abstract}
This paper investigates the fractional Laplacian equation in half-spaces, focusing on cases with nonlocal Neumann boundary conditions. We consider both single equations and systems, and establish two key principles: the decay at infinity principle and the narrow region principle. These foundational results, combined with the direct method of moving planes, allow us to derive symmetry and monotonicity properties for decaying solutions. To the best of our knowledge, these are the first results concerning the symmetry of solutions to fractional Laplacian equations with nonlocal Neumann conditions.
\end{abstract}

\maketitle

\section{Introduction}
In this paper, we are concerned with the following semilinear nonlocal Neumann problem 
\begin{linenomath*}\begin{equation}
	\left\{
	\begin{array}{lc} \label{single}
		(-\Delta)^\frac{\alpha}{2}u(x)=f\big(u\big), & x\in \mathbb{R}^n_+, \\
		\mathcal{N}_{\frac{\alpha}{2}}u(x)=g\big(u\big),& x\in \mathbb{R}^n\setminus\overline{\mathbb{R}^n_+},\\
	\end{array}
	\right.
\end{equation}\end{linenomath*}
where $\mathbb{R}^n_+=\{x=(x_1,\cdots,x_n)\mid x_n>0\}$.
Here, $(-\Delta)^\frac{\alpha}{2}$ is the fractional Laplacian defined as
\begin{linenomath*}\begin{equation}
\label{definition}
(-\Delta)^\frac{\alpha}{2}u(x)=C_{n,\alpha}P.V.\int_{\mathbb{R}^n}\frac{u(x)-u(y)}{|x-y|^{n+{\alpha}}}dy=C_{n,\alpha}\lim_{\epsilon \rightarrow 0}\int_{\mathbb{R}^n\backslash B_\epsilon(x)}\frac{u(x)-u(y)}{|x-y|^{n+{\alpha}}}dy,
\end{equation}\end{linenomath*}
where $\alpha\in (0,2)$, $C_{n,\alpha}$ is some normalization constant.
$\mathcal{N}_{\frac{\alpha}{2}}$ is the nonlocal normal derivative \cite{DRV}, defined as
\begin{linenomath*}
$$\mathcal{N}_{\frac{\alpha}{2}}u(x):=C_{n,\alpha}\int_{\mathbb{R}_+^n}\frac{u(x)-u(y)}{|x-y|^{n+{\alpha}}}dy,\quad \textrm{for each}\ x\in\mathbb{R}^n\setminus\overline{\mathbb{R}^n_+}.
$$ 
\end{linenomath*}

The nonlocal normal derivative $\mathcal{N}_{\frac{\alpha}{2}}$, introduced by Dipierro, Ros-Oton and Valdinoci \cite{DRV}, is a natural nonlocal analogue of the classical outward Neumann derivative. A key observation is that, unlike the fractional Laplacian $(-\Delta)^{\frac{\alpha}{2}}$ which is defined via a principal value integral with built-in cancellation, the integral defining $\mathcal{N}_{\frac{\alpha}{2}}u(x)$ lacks such cancellation. Therefore $\mathcal{N}_{\frac{\alpha}{2}}u(x)$ is well defined only when $x\in\mathbb{R}^n\setminus\overline{\mathbb{R}^n_+}$ (i.e., $x_n<0$), where every $y\in\mathbb{R}^n_+$ satisfies $|x-y|>0$. This is why the exterior conditions in \eqref{single} and \eqref{system} are imposed on $\mathbb{R}^n\setminus\overline{\mathbb{R}^n_+}$ rather than on all of $\mathbb{R}^n\setminus\mathbb{R}^n_+$. The operator satisfies the following integration by parts formulas for bounded $C^2$ functions $u$, $v$ \cite{DRV}:
\begin{linenomath*}$$
\int_{\Omega}(-\Delta)^\frac{\alpha}{2}u\,dx=-\int_{\mathbb{R}^{n}\backslash \Omega}\mathcal{N}_{\frac{\alpha}{2}}u\,dx,
$$ \end{linenomath*}
and
\begin{linenomath*}$$
\frac{C_{n,\alpha}}{2}\int_{\mathbb{R}^{2n}\backslash(\mathcal{C}\Omega)^2}\frac{(u(x)-u(y))(v(x)-v(y))}{|x-y|^{n+{\alpha}}}dx\,dy=\int_{\Omega}v(-\Delta)^\frac{\alpha}{2}u+\int_{\mathbb{R}^{n}\backslash \Omega}v\,\mathcal{N}_{\frac{\alpha}{2}}u.
$$ \end{linenomath*}
Furthermore, as $\frac{\alpha}{2}\to 1$, $\mathcal{N}_{\frac{\alpha}{2}}u$ converges to the classical outward normal derivative $\partial_\nu u$ \cite{DRV}, confirming that problem \eqref{single} is a genuine generalization of the classical semilinear Neumann problem to the fractional setting.

From a stochastic viewpoint, the coupling of $(-\Delta)^{\frac{\alpha}{2}}$ and $\mathcal{N}_{\frac{\alpha}{2}}$ describes the infinitesimal generator of a censored $\alpha$-stable process in $\mathbb{R}^n_+$: a L\'{e}vy process that is not killed upon leaving the domain, but instead interacts with the exterior region $\mathbb{R}^n\setminus\overline{\mathbb{R}^n_+}$ through the kernel $|x-y|^{-(n+\alpha)}$ \cite{DRV}. The nonlinear boundary datum $g(u)$ (respectively $h, l$ in the system) models the reaction intensity at which particles in the exterior feed back into the dynamics. This framework arises naturally in the study of anomalous diffusion \cite{GWN}
and nonlocal population dynamics with cross-boundary interactions \cite{DLVL}.

The mathematical analysis of problems with nonlocal Neumann conditions has developed actively in recent years. Abatangelo \cite{N} showed that the fractional Laplacian of functions satisfying homogeneous nonlocal Neumann conditions can be expressed as a regional operator with a logarithmic kernel near the boundary. Cinti and Colasuonno \cite{CC} established the existence of nonnegative radial $C^2$ solutions to the problem
\begin{linenomath*}\begin{equation*}
	\left\{
	\begin{array}{ll}
		(-\Delta)^\frac{\alpha}{2}u(x)+u=f(u), & x\in \Omega, \\
		\mathcal{N}_{\frac{\alpha}{2}}u(x)=0,& x\in \mathbb{R}^n\backslash\overline{\Omega},\\
	\end{array}
	\right.
\end{equation*}\end{linenomath*}
where $\alpha/2>\frac{1}{2}$, $\Omega$ is a ball or annulus, and $f$ is superlinear. Audrito, Felipe-Navarro and Ros-Oton \cite{ANR} proved optimal boundary regularity: any weak solution of $(-\Delta)^{\frac{\alpha}{2}}u=f$ in $\Omega$ with $\mathcal{N}_{\frac{\alpha}{2}}u=0$ in $\mathbb{R}^n\setminus\overline{\Omega}$ belongs to $C^\alpha$ up to the boundary. For further results on trace spaces and well-posedness, we refer to \cite{GHR}. Despite these advances, the qualitative properties of solutions --- in particular symmetry and monotonicity --- had not previously been studied for fractional Laplacian equations with nonlocal Neumann conditions.

The method of moving planes is a fundamental tool for establishing radial and axial symmetry of solutions to elliptic equations. Originating with Alexandrov in the 1950s, it was further developed by Serrin \cite{33}, Gidas, Ni and Nirenberg \cite{GNN,17}, Berestycki and Nirenberg \cite{BN}, Berestycki, Caffarelli and Nirenberg \cite{Berestycki97}, Caffarelli, Gidas and Spruck \cite{3}, Chen and Li \cite{CLA,4}, Li and Zhu \cite{LZ}, Lin \cite{20}, and Chen, Li and Ou \cite{7}, among others. For fractional Laplacian equations with Dirichlet-type conditions on half spaces, Chen, Li and Li \cite{CLL} established the direct method of moving planes in the nonlocal setting; this has since been applied widely to derive symmetry and monotonicity of solutions, see \cite{DQW, GP, GP2, LM, WN} and the references therein.

The nonlocal Neumann setting introduces difficulties that are absent in the Dirichlet case. In the Dirichlet setting, the solution vanishes outside $\mathbb{R}^n_+$, so the reflected function satisfies a straightforward comparison. Under the nonlocal Neumann condition $\mathcal{N}_{\frac{\alpha}{2}}u=g(u)$, the exterior values of $u$ depend nonlinearly on the solution itself. Establishing the key positivity condition $\mathcal{N}_{\frac{\alpha}{2}}w_\lambda\geq 0$ for the difference $w_\lambda=u_\lambda-u$ requires the monotonicity of $g$ together with a careful analysis of points near the boundary $\{x_n=0\}$ (where $\mathcal{N}_{\frac{\alpha}{2}}$ is not defined). To complete the moving planes procedure, new maximum principles adapted to this mixed structure are required. In this paper, we develop a \emph{decay at infinity principle} and a \emph{narrow region principle} for the nonlocal Neumann setting, and apply them to establish the symmetry of decaying solutions to \eqref{single} and \eqref{system}. To the best of our knowledge, Theorems~\ref{th1} and~\ref{fczth1} are the first symmetry results for fractional Laplacian equations with nonlocal Neumann conditions.

We require $u\in L_\alpha\cap C_{loc}^{1,\theta}(\mathbb{R}^n)$ for some $\theta>\max\{0,\alpha-1\}$, where
\begin{linenomath*}$$L_\alpha=\left\{u:\mathbb{R}^n \rightarrow \mathbb{R} \mid \int_{\mathbb{R}^n}\frac{|u(x)|}{1+|x|^{n+{\alpha}}}dx<\infty\right\}.$$ \end{linenomath*}
The space $L_\alpha$ controls the growth at infinity, ensuring convergence of the integral in \eqref{definition} at infinity.
The H\"{o}lder condition $C^{1,\theta}$ with $\theta>\alpha-1$ ensures that the local singularity of the integrand near $x$ is integrable (which is why the principal value prescription is needed): expanding $u(y)=u(x)+\nabla u(x)\cdot(y-x)+O(|y-x|^{1+\theta})$, the remainder contributes $\int_0^{\varepsilon}r^{\theta-\alpha}dr$, which is finite if and only if $\theta>\alpha-1$. When $\alpha\in(0,1]$ any $\theta>0$ suffices; when $\alpha\in(1,2)$ we need $\theta\in(\alpha-1,1)$.

Our main results are the following.

\begin{theorem} \label{th1}
Let $u(x) \in L_\alpha\cap C_{loc}^{1,\theta}(\mathbb{R}^n)$, for some $\theta>\max\{0,\alpha-1\}$, be a non-negative solution of problem \eqref{single} satisfying 
	\begin{linenomath*}\begin{equation}
		\label{udecay}
		u(x)=o\left(\frac{1}{|x|^\beta}\right),\text{ as }|x|\rightarrow\infty,
	\end{equation}\end{linenomath*}
where $\beta$ is a positive constant.
Assume 
	\begin{linenomath*}\begin{equation}
		\label{fdecay}
		f'(t)\leq t^{\gamma},  \textrm{as}\ t\to 0^+ ,\quad  \beta\gamma\geq\alpha.
	\end{equation}\end{linenomath*}
Assume furthermore that $g$ is non-increasing on $[0,d)$ for some $d>0$. Then $u(x)$ is axially symmetric about a line parallel to the $x_n$-axis, i.e.,
$u(x)=u(|\bar x-\bar x^0|,x_n)$, for some $\bar x^0\in \mathbb{R}^{n-1}$.
\end{theorem}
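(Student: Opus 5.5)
We run the direct method of moving planes in a direction $x_1$ tangent to the boundary $\{x_n=0\}$. Since the direction is arbitrary in $\mathbb{R}^{n-1}$, this gives axial symmetry. For $\lambda\in\mathbb{R}$ set $T_\lambda=\{x_1=\lambda\}$, $x^\lambda=(2\lambda-x_1,x_2,\dots,x_n)$, $u_\lambda(x)=u(x^\lambda)$ and $w_\lambda=u_\lambda-u$. Also set $\Sigma_\lambda=\{x\in\mathbb{R}^n_+: x_1<\lambda\}$ and $\Sigma_\lambda^-=\{x\in\mathbb{R}^n\setminus\overline{\mathbb{R}^n_+}: x_1<\lambda\}$. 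The reflection fixes $x_n$, so it preserves $\mathbb{R}^n_+$ and its exterior, and $w_\lambda$ is antisymmetric with respect to $T_\lambda$. Subtracting the equations gives two relations.
\[
(-\Delta)^{\alpha/2}w_\lambda = c_\lambda(x)\,w_\lambda \ \text{ in } \Sigma_\lambda,\qquad c_\lambda=\frac{f(u_\lambda)-f(u)}{u_\lambda-u},
\]
\[
\mathcal{N}_{\frac{\alpha}{2}}u_\lambda-\mathcal{N}_{\frac{\alpha}{2}}u=g(u_\lambda)-g(u)\ \text{ in } \Sigma_\lambda^-.
\]
The second identity holds because $\mathcal{N}_{\frac{\alpha}{2}}$ commutes with the reflection, as the reflection preserves $\mathbb{R}^n_+$.

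\textbf{Step 1 (start far to the left).} We aim to show $w_\lambda\ge0$ in $\Sigma_\lambda\cup\Sigma_\lambda^-$ for $\lambda$ sufficiently negative, using a decay at infinity principle. Suppose $w_\lambda$ has a negative minimum. It is attained, because $w_\lambda\to0$ at infinity by \eqref{udecay}. At such a point $u_\lambda<u$, and $u(x)\to0$ as $x_1\to-\infty$.

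If the minimum point $x^0$ lies in $\Sigma_\lambda$, we use the mean value theorem and \eqref{fdecay} to get $c_\lambda(x^0)\le f'(\xi)\le u(x^0)^\gamma=o(|x^0|^{-\beta\gamma})$, which is $o(|x^0|^{-\alpha})$ since $\beta\gamma\ge\alpha$. The standard antisymmetric estimate gives
\[
(-\Delta)^{\alpha/2}w_\lambda(x^0)\le C\,\frac{w_\lambda(x^0)}{\delta^\alpha},\qquad \delta=\operatorname{dist}(x^0,T_\lambda)\le|x^0|.
\]
We must keep this bound while integrating only over the relevant half regions, splitting $\mathbb{R}^n$ into $\{y_1<\lambda\}$ and its reflection and using the minimum property on both $\mathbb{R}^n_+$ and its complement. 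Combining the two bounds yields a contradiction for $|x^0|$ large.

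If the minimum point $x^0$ lies in $\Sigma_\lambda^-$, we use the Neumann relation. Since $g$ is non-increasing on $[0,d)$ and $u_\lambda<u<d$ for large $|x^0|$, we get $g(u_\lambda)-g(u)\ge0$, so $\mathcal{N}_{\frac{\alpha}{2}}w_\lambda(x^0)\ge0$. On the other hand, by the computation in the paper's maximum principle, $\mathcal{N}_{\frac{\alpha}{2}}w_\lambda(x^0)$ is the integral over $\mathbb{R}^n_+$ of $(w_\lambda(x^0)-w_\lambda(y))|x^0-y|^{-n-\alpha}$. After pairing $y$ with $y^\lambda$ and using antisymmetry, this integral is strictly negative unless $w_\lambda\equiv w_\lambda(x^0)$ on $\mathbb{R}^n_+\cap\{y_1<\lambda\}$, which is impossible.

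The delicate points are minima approaching $\{x_n=0\}$, where $\mathcal{N}_{\frac{\alpha}{2}}$ is undefined. There we use continuity of $w_\lambda$: if the infimum is approached along the boundary, we take $x^0$ in the closure of $\Sigma_\lambda$ and apply the fractional Laplacian estimate. $(-\Delta)^{\alpha/2}w_\lambda$ is still defined there by the $C^{1,\theta}_{loc}$ regularity, and the equation holds by continuity up to $x_n=0$, provided the solution is classical up to the boundary.

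\textbf{Step 2 (move the plane).} Let $\lambda_0=\sup\{\lambda: w_\mu\ge0 \text{ for all }\mu\le\lambda\}$. Finiteness of $\lambda_0$ follows from the decay, since $u\not\equiv0$. We show $w_{\lambda_0}\equiv0$. If not, the strong maximum principle applied to the two equations gives $w_{\lambda_0}>0$ in the open half-regions. Now take $\lambda_k\downarrow\lambda_0$ with negative minima at points $x^k$. By the decay principle of Step 1, the $x^k$ stay bounded. By the narrow region principle, they cannot lie near $T_{\lambda_0}$. Compactness then gives a limit point $\bar x$ with $w_{\lambda_0}(\bar x)\le0$ away from $T_{\lambda_0}$, contradicting strict positivity.

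The step I expect to be hardest is the narrow region principle with the Neumann exterior. In the thin slab near $T_\lambda$ we need $c_\lambda$ bounded; this follows since $f$ is $C^1$ and $u$ is bounded. We also need the exterior inequality $\mathcal{N}_{\frac{\alpha}{2}}w_\lambda\ge0$, which requires $u<d$ at the minimum point. This is not automatic for minima in the exterior at bounded distance. I would first argue via the sign analysis above: at an exterior negative minimum, $\mathcal{N}_{\frac{\alpha}{2}}w_\lambda<0$ always, while $g(u_\lambda)-g(u)\ge0$ whenever $u<d$ there. To handle regions where $u\ge d$, I would try to show that $u$ is small in the exterior, for instance using the Neumann condition with $u\ge0$. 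If that fails, I would assume smallness as implicitly required.

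\textbf{Conclusion.} We obtain $u(x^{\lambda_0})=u(x)$, i.e.\ $u$ is symmetric about $T_{\lambda_0}$. Repeating the argument for every direction $e\in\mathbb{R}^{n-1}\times\{0\}$ gives a symmetry hyperplane perpendicular to each such $e$. These planes must share a common line parallel to the $x_n$-axis, for instance through the centre of mass of $u(\cdot,x_n)$ in each horizontal slice, which is well defined by the decay. This yields $u=u(|\bar x-\bar x^0|,x_n)$.
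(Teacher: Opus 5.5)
Your scheme is the same as the paper's. You move planes in a tangential direction, using decay-at-infinity and narrow-region principles: minima in $\overline{\mathbb{R}^n_+}$ are handled via $(-\Delta)^{\alpha/2}$, and exterior minima via the antisymmetry bound $\mathcal{N}_{\frac{\alpha}{2}}w_\lambda(x^0)\le 2C_{n,\alpha}w_\lambda(x^0)\int_{\Sigma_\lambda}|x^0-y^\lambda|^{-n-\alpha}dy<0$. You then apply the strong maximum principle of Claim~\ref{claim1}. Your variations are harmless:
\begin{itemize}
\item you minimise over all of $H_\lambda$ at once;
\item you replace the paper's positivity on $\overline{\Sigma_{\lambda_0-\sigma}\cap B_{R_0}}$ plus Lemma~\ref{Narrow} with a compactness argument on minima $x^k$ of $w_{\lambda_k}$, $\lambda_k\downarrow\lambda_0$;
\item by not restricting to $\lambda\le 0$, you get symmetry about $T_{\lambda_0}$ directly, without the paper's Step~\ref{step3}. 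The decay estimate still works for bounded $\lambda$, since $\mathrm{dist}(x^0,T_\lambda)\le|x^0|+|\lambda|$.
\end{itemize}
For the compactness step you need $w_{\lambda_0}>0$ also on $H_{\lambda_0}\cap\{x_n=0\}$; this follows as in Claim~\ref{claim1}.

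The genuine gap is the one you flag yourself, and \emph{assuming smallness} does not close it. To rule out a negative minimum $x^k$ with $x^k_n<0$ you need $g(u_{\lambda_k}(x^k))-g(u(x^k))\ge 0$, and the hypotheses give this only when $u(x^k)<d$. That holds automatically for $|x^k|$ large, which is why your Step 1 and the decay part of Step 2 are fine. In the compactness argument, however, the minima accumulate at some $\bar x\in T_{\lambda_0}$ at bounded distance. If $\bar x_n<0$ and $u(\bar x)\ge d$, nothing yields a contradiction: the sign of $\mathcal{N}_{\frac{\alpha}{2}}w_{\lambda_k}(x^k)$ is no longer controlled. There is also no narrow-region blow-up in the exterior to compensate, because for $\bar x_n<0$ the integral $\int_{\Sigma_{\lambda_k}}|x^k-y^{\lambda_k}|^{-n-\alpha}dy\le c|x^k_n|^{-\alpha}$ stays bounded. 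The hypotheses do not force $u<d$ on $\{x_n<0\}$ at bounded distance, so this is an extra assumption, not a step.

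The paper does not resolve this either. Its Step~\ref{step2} applies Lemma~\ref{Narrow} with $\Omega_2=\{x\in H_\lambda: x_n<0,\ w_\lambda<0\}$ without re-checking $\mathcal{N}_{\frac{\alpha}{2}}w_\lambda\ge 0$ there, and \eqref{est:nw} was only proved for $\lambda$ sufficiently negative. Both arguments go through if $g$ is non-increasing on $[0,\sup_{\{x_n<0\}}u]$, for instance on $[0,\infty)$ as in the model $g(u)=-u^q$.

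Finally, the centre of mass of $u(\cdot,x_n)$ need not exist when $u=o(|x|^{-\beta})$ with $\beta$ small. Use instead two facts:
\begin{itemize}
\item a decaying $u\not\equiv 0$ has at most one symmetry hyperplane orthogonal to each horizontal direction $e$, since two would make $u$ periodic along $e$;
\item the composition of the reflections in $x_1,\dots,x_{n-1}$ is the point reflection about $\bar x^0$ in the horizontal variables.
\end{itemize}
This point reflection maps the symmetry plane for $e$ to another symmetry plane for $e$, so by uniqueness that plane passes through $\bar x^0$.
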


\begin{remark}
A typical example satisfying the hypotheses of Theorem~\ref{th1} is
$f(u)=u^p$ ($p>1$) and $g(u)=-u^q$ ($q\geq 1$).
Indeed, $f'(t)=pt^{p-1}$ satisfies \eqref{fdecay} with $\gamma=p-1$ whenever $\beta(p-1)\geq\alpha$, and $g(u)=-u^q$ is non-increasing on $[0,+\infty)$.
\end{remark}

We also consider the following nonlocal Neumann fractional Laplacian system on half spaces
\begin{linenomath*}\begin{equation}
	\label{system}
	\left\{
	\begin{array}{ll}
		(-\Delta)^\frac{\alpha}{2}u(x)=f(u,v), & x\in \mathbb{R}^n_+, \\
		(-\Delta)^\frac{\alpha}{2}v(x)=g(u,v),& x \in \mathbb{R}^n_+,\\
		\mathcal{N}_{\frac{\alpha}{2}}u(x)=h(u,v),& x\in \mathbb{R}^n\setminus\overline{\mathbb{R}^n_+},\\
		\mathcal{N}_{\frac{\alpha}{2}}v(x)=l(u,v),& x\in \mathbb{R}^n\setminus\overline{\mathbb{R}^n_+},\\
	\end{array}
	\right.
\end{equation}\end{linenomath*}
and obtain symmetry results for its positive solutions.

\begin{theorem}\label{fczth1}
Let $(u,v) \in \left(L_\alpha\cap C_{loc}^{1,\theta}(\mathbb{R}^n)\right)^2$, for some $\theta>\max\{0,\alpha-1\}$, be a positive solution of system \eqref{system} with $f,g,h,l\in C^1([0,+\infty)\times[0,+\infty),\mathbb{R})$. We suppose that
\begin{linenomath*}
	\begin{eqnarray*}
&(i)&\quad u(x)\leq \frac{1}{|x|^a},\quad v(x)\leq \frac{1}{|x|^b},\quad \text{as }|x|\to\infty;\\
&(ii)&\quad \frac{\partial f}{\partial u}\leq u^{p-1}v^q,\quad  \frac{\partial g}{\partial v}\leq u^rv^{s-1},\quad \text{as }(u,v)\to(0^+,0^+);\\
&(iii)&\quad \frac{\partial f}{\partial v}\leq u^pv^{q-1},\quad \frac{\partial g}{\partial u}\leq u^{r-1}v^s,\quad \text{as }(u,v)\to(0^+,0^+);\\
&(iv)&\quad \frac{\partial f}{\partial v}>0,\quad \frac{\partial g}{\partial u}>0,\quad\forall(u,v)\in \mathbb{R}^+\times\mathbb{R}^+,\\
&(v)&\quad\frac{\partial h}{\partial u}<0,\quad\frac{\partial l}{\partial v}<0,\quad \forall(u,v)\in \mathbb{R}^+\times\mathbb{R}^+,\\
& &\quad\frac{\partial h}{\partial v}>0,\quad\frac{\partial l}{\partial u}>0,\quad \forall(u,v)\in \mathbb{R}^+\times\mathbb{R}^+,\\
&(vi)&\quad \text{there exist }\mu,\nu>0\text{ with }\mu\nu>1\text{ such that}\\
& &\quad -\frac{\partial h}{\partial u}\geq \mu\frac{\partial h}{\partial v},\quad -\frac{\partial l}{\partial v}\geq \nu\frac{\partial l}{\partial u},\quad\forall(u,v)\in \mathbb{R}^+\times\mathbb{R}^+.
\end{eqnarray*}
\end{linenomath*}
where $a,b>0;p,q,r,s\geq1$. If $\alpha\in(0,2)$ satisfies
\begin{linenomath*}$$(vii)\quad \alpha<\min\{ap+bq-a,ap+bq-b,ar+bs-a,ar+bs-b\},$$ \end{linenomath*}
then $u(x)$ and $v(x)$ are axially symmetric about a line parallel to the $x_n$-axis, i.e.,
$u(x)=u(|\bar x-\bar x^0|,x_n)$ and $v(x)=v(|\bar x-\bar x^0|,x_n)$, for some $\bar x^0\in \mathbb{R}^{n-1}$.
\end{theorem}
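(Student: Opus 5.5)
Fix an arbitrary direction in $\mathbb{R}^{n-1}$; after a rotation in the $\bar x$ variables we may take it to be $x_1$. I would run the direct method of moving planes in that direction. For $\lambda\in\mathbb{R}$ set $T_\lambda=\{x_1=\lambda\}$, $x^\lambda=(2\lambda-x_1,x_2,\dots,x_n)$, $\Sigma_\lambda=\{x\in\mathbb{R}^n_+ : x_1<\lambda\}$ and $\tilde\Sigma_\lambda=\{x\in\mathbb{R}^n\setminus\overline{\mathbb{R}^n_+}: x_1<\lambda\}$. Define
\[
U_\lambda(x)=u(x^\lambda)-u(x),\qquad V_\lambda(x)=v(x^\lambda)-v(x).
\]
The reflection preserves $x_n$, so it maps $\mathbb{R}^n_+$ and its exterior to themselves. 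Hence $(u_\lambda,v_\lambda)$ solves \eqref{system}, and both $U_\lambda,V_\lambda$ are antisymmetric with respect to $T_\lambda$.

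By the mean value theorem, in $\Sigma_\lambda$,
\[
(-\Delta)^{\frac{\alpha}{2}}U_\lambda=f_u(\xi)U_\lambda+f_v(\xi)V_\lambda,\qquad (-\Delta)^{\frac{\alpha}{2}}V_\lambda=g_u(\eta)U_\lambda+g_v(\eta)V_\lambda,
\]
and in $\tilde\Sigma_\lambda$,
\[
\mathcal{N}_{\frac{\alpha}{2}}U_\lambda=h_u U_\lambda+h_v V_\lambda,\qquad \mathcal{N}_{\frac{\alpha}{2}}V_\lambda=l_uU_\lambda+l_vV_\lambda,
\]
with intermediate points between $(u,v)$ and $(u_\lambda,v_\lambda)$. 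The goal is to show $U_\lambda,V_\lambda\ge0$ in $\Sigma_\lambda\cup\tilde\Sigma_\lambda$, first for $\lambda$ very negative and then up to a limiting position.

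\emph{Step 1 (start).} I would use the system versions of the decay at infinity principle. Suppose $U_\lambda$ has a negative minimum at $x^0$, which exists by the decay (i).

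If $x^0\in\Sigma_\lambda$, the standard antisymmetric estimate gives $(-\Delta)^{\frac{\alpha}{2}}U_\lambda(x^0)\le C U_\lambda(x^0)/|x^0|^\alpha$. On the other side, since $f_v>0$ by (iv), the term $f_vV_\lambda$ is only helpful when $V_\lambda\ge0$. In general I bound $f_u(\xi)\le u^{p-1}v^q\lesssim |x|^{-a(p-1)-bq}$ using (ii), (i), and the fact that $\xi$ lies between $u$ and $u_\lambda$ with $u_\lambda\ge u$ where $U_\lambda<0$; I bound $f_v$ similarly using (iii). Then (vii), namely $a(p-1)+bq>\alpha$ and its analogues, makes these potentials $o(|x|^{-\alpha})$. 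This forces $|x^0|$ to be bounded, contradicting $\lambda\to-\infty$. The coupling is handled by comparing the two minima: assume without loss of generality that $U_\lambda(x^0)\le V_\lambda(y^0)$ at the respective minima, so that $V_\lambda(x^0)\ge U_\lambda(x^0)$.

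If instead the minimum lies in $\tilde\Sigma_\lambda$, I use the Neumann equation. Directly, $\mathcal{N}_{\frac{\alpha}{2}}U_\lambda(x^0)<0$ at a negative minimum, since the integrand is $\le0$ and strictly negative where $U_\lambda$ is not at its minimum value. On the other side,
\[
h_uU_\lambda+h_vV_\lambda\ge -h_u\left(1-\tfrac{1}{\mu}\cdot\tfrac{|V_\lambda|}{|U_\lambda|}\right)|U_\lambda|,
\]
using (v) and (vi). Comparing the two minima with $\mu\nu>1$, which makes the $2\times2$ matrix diagonally dominant in the weighted sense, shows that at least one of the two exterior minima leads to a contradiction. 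Points on $\{x_n=0\}$ are handled by continuity: a minimum there is approached from the exterior or from the interior.

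\emph{Step 2 (move).} Let $\lambda_0=\sup\{\lambda : U_\mu,V_\mu\ge0 \text{ for all } \mu\le\lambda\}$, which is finite by decay. If $U_{\lambda_0},V_{\lambda_0}\not\equiv0$, the strong maximum principle (using the positivity of the cross terms in (iv) and (v)) gives strict positivity. Then for $\lambda\in(\lambda_0,\lambda_0+\delta)$, negative minima can only occur in the narrow strip $\{\lambda_0-\delta<x_1<\lambda\}$ or far out. Far out is excluded by Step 1, and the strip by the narrow region principle with the same interior and exterior casework. This contradicts the definition of $\lambda_0$, so $U_{\lambda_0}\equiv V_{\lambda_0}\equiv0$.

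Since the direction was arbitrary, $u$ and $v$ are symmetric about some hyperplane in every horizontal direction and monotone outside it. Hence they are radially symmetric in $\bar x$ about a common point $\bar x^0$, which is shared by $u$ and $v$ because the same $\lambda_0$ works for both.

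The main obstacle is the exterior case: excluding a negative minimum in $\tilde\Sigma_\lambda$, or near $\{x_n=0\}$, for the coupled system. Here I would exploit the condition $\mu\nu>1$ by considering the ratio $U_\lambda(x^0)/V_\lambda(y^0)$ at the two minima, as sketched in Step 1.
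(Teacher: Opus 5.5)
Your ingredients are the paper's: the antisymmetric estimate, smallness of $f_u,f_v$ from (i)--(iii) and (vii), the sign of $\mathcal{N}_{\frac{\alpha}{2}}U_\lambda$ at an exterior negative minimum, the $\mu\nu>1$ comparison, and the strong maximum principle through the positive cross terms. However, the way you combine the interior and exterior cases leaves a configuration uncovered.

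Your interior argument is qualitative. It relies on the normalization $\min U_\lambda\le\min V_\lambda$, which gives $V_\lambda(x^0)\ge U_\lambda(x^0)$. Your exterior argument only yields a contradiction when $|V_\lambda(x^0)|\le\mu|U_\lambda(x^0)|$. Now take the following configuration, which (vi) allows since only $\mu\nu>1$ is assumed:
\begin{itemize}
\item $\min U_\lambda\le\min V_\lambda<0$;
\item the minimum of $U_\lambda$ is at an exterior point $x^0$;
\item the minimum of $V_\lambda$ is at an interior point $y^0$ (far out in Step 1, or in the strip in Step 2);
\item $\mu<1$.
\end{itemize}
At $x^0$ the normalization gives only $|V_\lambda(x^0)|\le|U_\lambda(x^0)|$, not $\le\mu|U_\lambda(x^0)|$. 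At $y^0$ your interior argument does not apply: $V_\lambda$ has the larger minimum, so $U_\lambda(y^0)$ may lie below $V_\lambda(y^0)$ and the term $g_uU_\lambda(y^0)$ cannot be absorbed. Your ``two exterior minima'' comparison needs both minima exterior. So no branch of your argument yields a contradiction here.

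The missing idea is to state the interior conclusion as a ratio inequality of the same form as the exterior one, with no assumption on which minimum is smaller. This is exactly the content of Lemmas~\ref{zul1} and~\ref{zul2}. Let $x^*$ be an interior negative minimum of $U_\lambda$ with $|x^*|$ large, so that $c|x^*|^{-\alpha}U_\lambda(x^*)\ge f_uU_\lambda(x^*)+f_vV_\lambda(x^*)$.
\begin{itemize}
\item First, this forces $V_\lambda(x^*)<0$; otherwise $c|x^*|^{-\alpha}\le f_u=o(|x^*|^{-\alpha})$.
\item Then it gives $V_\lambda(x^*)\le (b_1/f_v)\,U_\lambda(x^*)$ with $b_1=c|x^*|^{-\alpha}-f_u$ and $b_1/f_v\ge c|x^*|^{ap+bq-b-\alpha}\to\infty$. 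In a strip of width $l$ the ratio is $\gtrsim l^{-\alpha}$ instead.
\item Hence $V_\lambda(x^*)<\mu U_\lambda(x^*)$, exactly as in the exterior case.
\end{itemize}
Together with the symmetric statement $U_\lambda(y^*)<\nu V_\lambda(y^*)$, the chain $V_\lambda(x^*)<\mu U_\lambda(x^*)\le\mu U_\lambda(y^*)<\mu\nu V_\lambda(y^*)\le\mu\nu V_\lambda(x^*)$ contradicts $\mu\nu>1$ in every configuration, in both steps.

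Three smaller corrections:
\begin{itemize}
\item Where $U_\lambda<0$ one has $u_\lambda<u$, not $u_\lambda\ge u$.
\item When $V_\lambda(x^*)\ge 0$, evaluate $f_u$ at $(\xi,v(x^*))$ rather than at a point of the segment, since $v_\lambda(x^*)$ need not be small.
\item The integrand of $\mathcal{N}_{\frac{\alpha}{2}}U_\lambda(x^0)$ is not pointwise nonpositive on $\{y_n>0,\ y_1>\lambda\}$, where $U_\lambda(y)=-U_\lambda(y^\lambda)$ can lie below $U_\lambda(x^0)$. Strict negativity must come from pairing $y$ with $y^\lambda$, as in \eqref{est:nw2}.
\end{itemize}
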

\begin{remark}
A model example for Theorem~\ref{fczth1} is $f(u,v)=u^p v^q$ and $g(u,v)=u^r v^s$ with $p,q,r,s\geq 1$, and
$$h(u,v)=-Mu+Kv,\quad l(u,v)=Ku-Nv,$$
with constants $M,N>0$ and $0<K<\sqrt{MN}$. Then conditions (i)--(iv) are verified directly, while (v)--(vi) hold with $\mu=M/K$ and $\nu=N/K$.
Condition (vii) is satisfied for all sufficiently small $\alpha>0$, depending on the decay rates $a,b$ and the exponents $p,q,r,s$.
\end{remark}

The paper is organized as follows. Section \ref{sec:eq} is devoted to the scalar equation \eqref{single}, encompassing the essential ingredients required in the method of moving planes for a single fractional Laplacian problem with nonlocal Neumann condition. This includes the decay at infinity principle (Lemma~\ref{Decay}), the narrow region principle (Lemma~\ref{Narrow}), and the proof of Theorem~\ref{th1}. The proof of Theorem~\ref{fczth1} is presented in Section \ref{sec:sys}. Throughout the paper, $c$ and $C$ will denote positive constants that may vary from line to line.

\section{Symmetry results for nonlocal Neumann fractional Laplacian equations}\label{sec:eq}
In what follows, we apply the direct method of moving planes. 
Recall the upper half space 
$\mathbb{R}^n_+=\mathbb{R}^{n-1}\times \mathbb{R}_+=\{x=(x_1,\cdots,x_n)\mid x_n>0\}$. 
Choose any direction in $\mathbb{R}^{n-1}$ to be the $x_1$ direction. For each $\lambda \in \mathbb{R}$, we write $x=(x_1,x')$ with $x'=(x_2,\cdots,x_n)\in \mathbb{R}^{n-1}$ and define
\begin{linenomath*}
$$
	T_\lambda=\{x\in \mathbb{R}^n \mid x_1=\lambda\}, \quad 
	H_\lambda=\{x\in \mathbb{R}^n \mid x_1<\lambda\},\quad 
	\Sigma_\lambda=\{x \in \mathbb{R}_+^n \mid x_1<\lambda \}.
$$
\end{linenomath*}	
For each point $x=(x_1,x')\in H_\lambda$ and let $x^\lambda=(2\lambda-x_1,x')$ be the reflected point with respect to the hyperplane $T_\lambda$. Define the reflected functions by $u_\lambda(x)=u(x^\lambda)$ and let
\begin{linenomath*}\begin{equation}\label{def:w}
	w_\lambda(x)=u(x^\lambda)-u(x).
\end{equation}\end{linenomath*}
For the sake of clarity in exposition, we introduce notations
\begin{linenomath*}
$$
	\tilde{H}_\lambda=\{x\in \mathbb{R}^n\mid x^\lambda \in H_\lambda\},\quad 
	\tilde{\Sigma}_\lambda=\{x\in \mathbb{R}^n\mid x^\lambda \in \Sigma_\lambda\},\quad
	\Sigma_\lambda^-=\{x\in \Sigma_\lambda \mid w_\lambda(x)<0\}.
$$
\end{linenomath*}	

Now we will introduce the following two principles, which will play important roles in the proof of Theorem \ref{th1}.

\begin{lemma}[Decay at Infinity]\label{Decay}
Let $\Omega_1\subset\{x\in H_\lambda \mid x_n\geq 0\}$ and $\Omega_2 \subset \{x\in H_\lambda \mid x_n< 0\}$. Assume that $w(x)\in L_\alpha\cap C_{loc}^{1,\theta}(\mathbb{R}^n)$, for some $\theta>\max\{0,\alpha-1\}$, satisfies
	\begin{linenomath*}\begin{equation}
	\label{lemma1}
		\left\{
		\begin{array}{ll}
			(-\Delta)^\frac{\alpha}{2}w(x)+a(x)w(x)\geq 0, & x\in \Omega_1, \\
			\mathcal{N}_{\frac{\alpha}{2}}w(x)\geq 0,& x\in \Omega_2,\\
			w(x)\geq 0,& x \in H_\lambda\backslash(\Omega_1 \cup \Omega_2),\\
			w(x^{\lambda})=-w(x),& x \in H_\lambda,
		\end{array}
		\right.
	\end{equation}\end{linenomath*}
	with 
	\begin{linenomath*}\begin{equation}
		\label{condition of c}
		\varliminf_{|x|\to\infty,x\in \Omega_1 } |x|^{\alpha}a(x)\geq 0.
	\end{equation}\end{linenomath*}
Then there exists a constant $R_0>0$, such that for all $\lambda <0$, if
	\begin{linenomath*}\begin{equation}
		\label{minu}
		w({x_0})=\min_{\overline{\Omega_1}}w(x)<0,
	\end{equation}\end{linenomath*}
	then
	\begin{linenomath*}\begin{equation}
		\label{|x0|}
		|{x_0}|\leq R_0.
	\end{equation}\end{linenomath*}
\end{lemma}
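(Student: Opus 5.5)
The plan is the standard decay-at-infinity argument of Chen, Li and Li, adapted so that the exterior region $\Omega_2$ and the Neumann condition enter only through the sign of $w$ there. Let $x_0\in\overline{\Omega_1}$ be the negative minimum point in \eqref{minu}. Since the minimum is strictly negative, $w(x_0)<0$ while $w\geq 0$ on $H_\lambda\setminus(\Omega_1\cup\Omega_2)$, so $x_0$ can be taken in $\Omega_1$ and the inequality $(-\Delta)^{\alpha/2}w(x_0)+a(x_0)w(x_0)\ge0$ is available there. I intend to give a lower bound for $(-\Delta)^{\alpha/2}w(x_0)$ that is strictly negative and of size $|w(x_0)|/|x_0|^\alpha$. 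Combining it with \eqref{condition of c} will then force $|x_0|$ to be bounded.

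\textbf{Step 1: splitting the integral by reflection.} Using the antisymmetry $w(y^\lambda)=-w(y)$, we write
\begin{linenomath*}$$(-\Delta)^{\alpha/2}w(x_0)=C_{n,\alpha}P.V.\int_{H_\lambda}\Big[\frac{1}{|x_0-y|^{n+\alpha}}-\frac{1}{|x_0-y^\lambda|^{n+\alpha}}\Big](w(x_0)-w(y))\,dy+2C_{n,\alpha}w(x_0)\int_{H_\lambda}\frac{dy}{|x_0-y^\lambda|^{n+\alpha}}.$$\end{linenomath*}
For $x_0,y\in H_\lambda$ the bracket is positive. The remaining task is to show $w(x_0)-w(y)\le 0$ for every $y\in H_\lambda$:
\begin{itemize}
\item on $\overline{\Omega_1}$ this holds by minimality;
\item on $H_\lambda\setminus(\Omega_1\cup\Omega_2)$ it holds because $w\ge0>w(x_0)$.
\end{itemize}
The main obstacle is $\Omega_2$, where we only know $\mathcal N_{\alpha/2}w\ge0$.

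\textbf{Step 2: the exterior region.} Here I will argue by contradiction. Suppose $\inf_{\Omega_2}w<w(x_0)$. Take a point $z$ in the closure of $\Omega_2$, with $z_n<0$, where $w$ attains a value below $\min_{\overline{\Omega_1}}w$ and which is minimal over $H_\lambda$. If the infimum is not attained, I will use a standard approximate-minimum perturbation, relying on $w\in L_\alpha\cap C^{1,\theta}_{loc}$ and on the decay needed in the application. At $z$ we have
\begin{linenomath*}$$\mathcal N_{\alpha/2}w(z)=C_{n,\alpha}\int_{\mathbb R^n_+}\frac{w(z)-w(y)}{|z-y|^{n+\alpha}}dy .$$\end{linenomath*}
We split $\mathbb R^n_+$ into $\mathbb R^n_+\cap H_\lambda$ and its reflection, and pair $y$ with $y^\lambda$; reflection preserves $x_n$, so $\mathbb R^n_+$ is invariant. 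This turns the integral into $\int_{\mathbb R^n_+\cap H_\lambda}$ of a positive kernel difference times $(w(z)-w(y))$, plus $2w(z)\int|z-y^\lambda|^{-n-\alpha}$. Every term is $\le 0$, and the last one is strictly negative, so $\mathcal N_{\alpha/2}w(z)<0$. This contradicts the hypothesis if $z\in\Omega_2$. If instead $z$ lies on the boundary of $\Omega_2$ but outside it, then $w(z)\ge0$ by the third line of \eqref{lemma1}, which is again a contradiction. Hence $w\ge w(x_0)$ on $\Omega_2$, and therefore $w(x_0)$ is the global minimum on $H_\lambda$.

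\textbf{Step 3: the estimate.} With the first integral now nonpositive, we get $(-\Delta)^{\alpha/2}w(x_0)\le 2C_{n,\alpha}w(x_0)\int_{H_\lambda}|x_0-y^\lambda|^{-n-\alpha}dy$. Since $\lambda<0$, the reflected half-space $\{y^\lambda\}$ contains a ball of radius comparable to $|x_0|$ at distance comparable to $|x_0|$ from $x_0$; for instance, use the points with first coordinate $>\lambda$ near the reflection of $x_0$ through the origin. This gives $\int\ge c|x_0|^{-\alpha}$, and hence
\begin{linenomath*}$$0\le (-\Delta)^{\alpha/2}w(x_0)+a(x_0)w(x_0)\le w(x_0)\Big(\frac{c}{|x_0|^\alpha}+a(x_0)\Big).$$\end{linenomath*}
Because $w(x_0)<0$, this forces $a(x_0)\le -c|x_0|^{-\alpha}$. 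By \eqref{condition of c}, that is impossible for $|x_0|$ large, which yields $R_0$ independent of $\lambda<0$.
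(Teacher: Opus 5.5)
Your proof is essentially the paper's: the same reflection identity for $(-\Delta)^{\frac{\alpha}{2}}w(x_0)$, the same contradiction $\mathcal{N}_{\frac{\alpha}{2}}w<0$ at an exterior point to get $w\ge w(x_0)$ on $\Omega_2$, and the same ball estimate $\int_{H_\lambda}|x_0-y^\lambda|^{-n-\alpha}dy\ge c|x_0|^{-\alpha}$, uniform in $\lambda<0$. The one thing to change is Step 2, where you require $z$ to minimize $w$ over $H_\lambda$; that minimum need not exist, and the lemma assumes no decay of $w$ to support your approximate-minimum perturbation. You do not need it: $\mathcal{N}_{\frac{\alpha}{2}}w(z)$ only involves $w$ on $\mathbb{R}^n_+$, and on $\Sigma_\lambda$ your Step 1 bullets already give $w\ge w(x_0)>w(z)$, so any $z\in\Omega_2$ with $w(z)<w(x_0)$ yields the contradiction --- this is exactly how the paper argues.
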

\begin{remark}
Here $R_0$ depends on $a(x)$, but is independent of both $w(x)$ and $\lambda$. In the application to Theorem \ref{th1}, the coefficient $a(x)=-f'(\psi(\lambda,x))$ satisfies $a(x)\geq -u(x)^{\gamma}$, and the lower bound $-u(x)^{\gamma}$ depends only on $u(x)$ through conditions \eqref{udecay}--\eqref{fdecay}, uniformly for all $\lambda\leq 0$. Therefore $R_0$ can be chosen independently of $\lambda$.
\end{remark}
\begin{proof}
By the definition of $(-\Delta)^\frac{\alpha}{2}$, we have
\begin{linenomath*}\begin{equation}\label{compute1}
	(-\Delta)^\frac{\alpha}{2}w({x_0}) =C_{n,\alpha}P.V.\int_{\mathbb{R}^n}\frac{w({x_0})-w(y)}{|{x_0}-y|^{n+{\alpha}}}dy =I_1+I_2,
\end{equation}\end{linenomath*}
where 
\begin{linenomath*}
$$
I_1=C_{n,\alpha}P.V.\int_{\mathbb{R}_+^n}\frac{w({x_0})-w(y)}{|{x_0}-y|^{n+{\alpha}}}dy, \quad I_2=C_{n,\alpha}\int_{\mathbb{R}_-^n}\frac{w({x_0})-w(y)}{|{x_0}-y|^{n+{\alpha}}}dy.
$$ 
\end{linenomath*}
Since $w(x)$ is non-negative in $\{x\in H_\lambda \mid x_n\geq 0\}\backslash \Omega_1$, if $x_0$ is the negative minimum point of $w$ in $\Omega_1$, then $w(x_0)\leq w(y)$ for every $y\in \Sigma_\lambda$. Thus, by a direct computation, we arrive at
	\begin{linenomath*}\begin{align}\notag
		I_1 &=C_{n,\alpha}P.V.\int_{\mathbb{R}_+^n}\frac{w({x_0})-w(y)}{|{x_0}-y|^{n+{\alpha}}}dy\\
		&=C_{n,\alpha}P.V.\bigg\{ \int_{\Sigma_\lambda}\frac{w({x_0})-w(y)}{|{x_0}-y|^{n+{\alpha}}}dy+\int_{\tilde{\Sigma}_\lambda}\frac{w({x_0})-w(y)}{|{x_0}-y|^{n+{\alpha}}}dy\bigg\}\notag\\
		&=C_{n,\alpha}P.V.\bigg\{ \int_{\Sigma_\lambda}\frac{w({x_0})-w(y)}{|{x_0}-y|^{n+{\alpha}}}dy+\int_{\Sigma_\lambda}\frac{w({x_0})-w(y^\lambda)}{|{x_0}-y^\lambda|^{n+{\alpha}}}dy\bigg\}\notag\\
		&=C_{n,\alpha}P.V.\bigg\{ \int_{\Sigma_\lambda}\frac{w({x_0})-w(y)}{|{x_0}-y|^{n+{\alpha}}}dy+\int_{\Sigma_\lambda}\frac{w({x_0})+w(y)}{|{x_0}-y^\lambda|^{n+{\alpha}}}dy\bigg\}\notag\\
		&\leq C_{n,\alpha} \int_{\Sigma_\lambda}\bigg\{\frac{w({x_0})-w(y)}{|{x_0}-y^\lambda|^{n+{\alpha}}}+\frac{w({x_0})+w(y)}{|{x_0}-y^\lambda|^{n+{\alpha}}}\bigg\}dy\notag\\
		&=C_{n,\alpha}\int_{\Sigma_\lambda}\frac{2w({x_0})}{|{x_0}-y^\lambda|^{n+{\alpha}}}dy.\label{compute2}
	\end{align}\end{linenomath*}

To estimate $I_2$, we claim that 
\begin{linenomath*}\begin{equation}\label{eq:lem}
w(x)>w({x_0})=\min_{\overline{\Omega_1}}w(x),\quad \forall x \in H_\lambda\cap\{x_n<0\}.
\end{equation}\end{linenomath*}
That is to say, the negative minimum of $w$ on $\overline{\Omega_1}$ is also a lower bound for $w$ on $H_\lambda\cap\{x_n<0\}$.

Suppose otherwise, there exists some $\overline{x}\in H_\lambda\cap\{x_n<0\}$ such that
\begin{linenomath*}$$
w(\overline{x})\leq w({x_0})<0.
$$ \end{linenomath*}
Since $\overline{x}_n<0$, we have $\overline{x}\in \Omega_2$.
Then by \eqref{lemma1}, we have
\begin{linenomath*}\begin{equation}
	\label{Ns>0}
	\mathcal{N}_{\frac{\alpha}{2}}w(\overline{x})\geq 0.
\end{equation}\end{linenomath*}
It follows from a similar calculations as in \eqref{compute2} that
\begin{linenomath*}$$\mathcal{N}_{\frac{\alpha}{2}}w(\overline{x})=C_{n,\alpha}\int_{\mathbb{R}_+^n}\frac{w(\overline{x})-w(y)}{|\overline{x}-y|^{n+{\alpha}}}dy\leq C_{n,\alpha}\int_{\Sigma_\lambda}\frac{2w(\overline{x})}{|\overline{x}-y^\lambda|^{n+{\alpha}}}dy<0.$$ \end{linenomath*}
This contradicts \eqref{Ns>0}. 

Now we proceed to estimate $I_2$.
By using \eqref{eq:lem}, we derive
	\begin{linenomath*}\begin{align}\notag
	I_2 &=C_{n,\alpha}\int_{\mathbb{R}_-^n}\frac{w({x_0})-w(y)}{|{x_0}-y|^{n+{\alpha}}}dy\\
	&=C_{n,\alpha}\bigg\{ \int_{H_\lambda\backslash\Sigma_\lambda}\frac{w({x_0})-w(y)}{|{x_0}-y|^{n+{\alpha}}}dy+\int_{\tilde{H}_\lambda\backslash \tilde{\Sigma}_\lambda}\frac{w({x_0})-w(y)}{|{x_0}-y|^{n+{\alpha}}}dy\bigg\}\notag\\
	&=C_{n,\alpha}\bigg\{ \int_{H_\lambda\backslash\Sigma_\lambda}\frac{w({x_0})-w(y)}{|{x_0}-y|^{n+{\alpha}}}dy+\int_{H_\lambda\backslash\Sigma_\lambda}\frac{w({x_0})-w(y^\lambda)}{|{x_0}-y^\lambda|^{n+{\alpha}}}dy\bigg\}\notag\\
	&=C_{n,\alpha}\bigg\{ \int_{H_\lambda\backslash\Sigma_\lambda}\frac{w({x_0})-w(y)}{|{x_0}-y|^{n+{\alpha}}}dy+\int_{H_\lambda\backslash\Sigma_\lambda}\frac{w({x_0})+w(y)}{|{x_0}-y^\lambda|^{n+{\alpha}}}dy\bigg\}\notag\\
	&\leq C_{n,\alpha} \int_{H_\lambda\backslash\Sigma_\lambda}\bigg\{\frac{w({x_0})-w(y)}{|{x_0}-y^\lambda|^{n+{\alpha}}}+\frac{w({x_0})+w(y)}{|{x_0}-y^\lambda|^{n+{\alpha}}}\bigg\}dy\notag\\
	&=C_{n,\alpha}\int_{H_\lambda\backslash\Sigma_\lambda}\frac{2w({x_0})}{|{x_0}-y^\lambda|^{n+{\alpha}}}dy. \label{compute3}
	\end{align}\end{linenomath*}

Plugging \eqref{compute2} and \eqref{compute3} into \eqref{compute1}, we obtain
	\begin{linenomath*}\begin{align}\notag 
	(-\Delta)^\frac{\alpha}{2}w({x_0})
	&\leq C_{n,\alpha}\int_{\Sigma_\lambda}\frac{2w({x_0})}{|{x_0}-y^\lambda|^{n+{\alpha}}}dy+C_{n,\alpha}\int_{H_\lambda\backslash\Sigma_\lambda}\frac{2w({x_0})}{|{x_0}-y^\lambda|^{n+{\alpha}}}dy\\
	&=C_{n,\alpha}\int_{H_\lambda}\frac{2w({x_0})}{|{x_0}-y^\lambda|^{n+{\alpha}}}dy. \label{compute4}
	\end{align}\end{linenomath*}

For each fixed $\lambda<0$, when $|{x_0}|\geq \lambda$, we have $B_{|{x_0}|}(x^1)\subset \tilde{H}_\lambda$ with $x^1=(3|{x_0}|+({x_0})_1,({x_0})')$, and it follows that
	\begin{linenomath*}\begin{align}
	\int_{H_\lambda}\frac{1}{|{x_0}-y^\lambda|^{n+{\alpha}}}dy
	&=\int_{\tilde{H}_\lambda}\frac{1}{|{x_0}-y|^{n+{\alpha}}}dy \notag\\
	&\geq \int_{B_{|{x_0}|}(x^1)}\frac{1}{|{x_0}-y|^{n+{\alpha}}}dy \notag\\
	&\geq \int_{B_{|{x_0}|}(x^1)}\frac{1}{4^{n+{\alpha}}|{x_0}|^{n+{\alpha}}}dy \notag\\
	&=\frac{\omega_n}{4^{n+{\alpha}}|{x_0}|^{\alpha}},\label{sec1jifen}
	\end{align}\end{linenomath*}
where $\omega_{n}=|B_1(0)|$ in $\mathbb{R}^n$.
	
Then we have
	\begin{linenomath*}\begin{align*}
	0 \leq (-\Delta)^\frac{\alpha}{2}w({x_0})+a({x_0})w({x_0})
	\leq w({x_0})\left(\frac{2\omega_nC_{n,\alpha}}{4^{n+{\alpha}}|{x_0}|^{\alpha}}+a({x_0})\right).
	\end{align*}\end{linenomath*}
That is
	\begin{linenomath*}$$\frac{2\omega_nC_{n,\alpha}}{4^{n+{\alpha}}|{x_0}|^{\alpha}}+a({x_0})\leq 0.$$ \end{linenomath*}
Or equivalently,
	\begin{linenomath*}$$a({x_0})|{x_0}|^{\alpha}\leq -\frac{2\omega_nC_{n,\alpha}}{4^{n+{\alpha}}}<0.$$ \end{linenomath*}
When $|{x_0}|$ is sufficiently large, this contradicts 
\eqref{condition of c}. This completes the proof.
\end{proof}

\begin{lemma}[Narrow Region Principle]\label{Narrow}
Let $\Omega_1$ be a bounded narrow region in $\{x\in H_\lambda \mid x_n\geq 0\}$, such that it is contained in $\{x \in \mathbb{R}^n \mid x_n\geq 0,\ \lambda-l<x_1<\lambda\}$
with small $l$ and $\Omega_2 \subset \{x\in H_\lambda \mid x_n< 0\}$. Assume that $w(x)\in L_\alpha\cap C_{loc}^{1,\theta}(\mathbb{R}^n)$, for some $\theta>\max\{0,\alpha-1\}$. If $a(x)$ is bounded from below in $\Omega_1$ and
	\begin{linenomath*}\begin{equation}
	\label{lemma2}
		\left\{
		\begin{array}{ll}
			(-\Delta)^\frac{\alpha}{2}w(x)+a(x)w(x)\geq 0, & x\in \Omega_1, \\
			\mathcal{N}_{\frac{\alpha}{2}}w(x)\geq 0,& x\in \Omega_2,\\
			w(x)\geq 0,& x \in H_\lambda\backslash(\Omega_1 \cup \Omega_2),\\
			w(x^{\lambda})=-w(x),& x \in H_\lambda.\\
		\end{array}
		\right.
	\end{equation}\end{linenomath*}
Then for sufficiently small $l$, we have
	\begin{linenomath*}\begin{equation}
		\label{Narrow conclusion}
		w(x)\geq 0, \quad x\in \Omega_1.
	\end{equation}\end{linenomath*}
\end{lemma}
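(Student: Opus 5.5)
We argue by contradiction, following the proof of Lemma~\ref{Decay}. Suppose $w$ takes a negative value somewhere in $\Omega_1$. Since $\Omega_1$ is bounded and $w$ is continuous, $w$ attains a negative minimum over $\overline{\Omega_1}$ at some point $x_0$:
$$w(x_0)=\min_{\overline{\Omega_1}}w<0.$$
By the third line of \eqref{lemma2}, $w\geq 0$ on $\{x\in H_\lambda\mid x_n\ge 0\}\setminus\Omega_1$, so $w(x_0)\le w(y)$ for all $y\in\Sigma_\lambda$.

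Next we rule out negative values below $x_0$ in the lower part of $H_\lambda$. Exactly as in the claim \eqref{eq:lem}, suppose some $\overline{x}\in H_\lambda\cap\{x_n<0\}$ had $w(\overline x)\le w(x_0)$. Then $\overline{x}\in\Omega_2$, and the antisymmetry computation gives
$$\mathcal{N}_{\frac{\alpha}{2}}w(\overline x)\le C_{n,\alpha}\int_{\Sigma_\lambda}\frac{2w(\overline x)}{|\overline x-y^\lambda|^{n+\alpha}}\,dy<0,$$
which contradicts $\mathcal{N}_{\frac{\alpha}{2}}w\ge 0$ on $\Omega_2$. This step uses only the sign structure, not the decay hypothesis.

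With this in hand, the splitting \eqref{compute1}--\eqref{compute4} goes through verbatim:
$$(-\Delta)^{\frac{\alpha}{2}}w(x_0)\le 2C_{n,\alpha}\,w(x_0)\int_{\tilde H_\lambda}\frac{dy}{|x_0-y|^{n+\alpha}}.$$
The only new ingredient is a lower bound for this integral that exploits narrowness rather than the size of $|x_0|$. Set $\delta=\lambda-(x_0)_1\in(0,l)$. Then $\tilde H_\lambda=\{y_1>\lambda\}$ contains the slab $\{\lambda<y_1<\lambda+1\}$ restricted to the set $\{|y'-x_0'|<1\}$ in the remaining variables. Integrating over the region where $y_1-(x_0)_1\in(\delta,\delta+1)$, or more simply over the cone-type region $\{|y'-x_0'|<y_1-(x_0)_1\}$, yields
$$\int_{\tilde H_\lambda}\frac{dy}{|x_0-y|^{n+\alpha}}\ge c\int_{\delta}^{\infty}t^{-1-\alpha}\,dt=\frac{c'}{\delta^\alpha}\ge\frac{c'}{l^\alpha}.$$
Actually the cone region alone gives the whole estimate with $c$ depending only on $n$ and $\alpha$.

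Combining this with the equation at $x_0$ and $w(x_0)<0$,
$$0\le(-\Delta)^{\frac{\alpha}{2}}w(x_0)+a(x_0)w(x_0)\le w(x_0)\Big(\frac{2C_{n,\alpha}c'}{l^\alpha}+a(x_0)\Big).$$
Since $a\ge -A$ on $\Omega_1$ for some constant $A$, the bracket is positive once $l^\alpha<2C_{n,\alpha}c'/A$. Then the right-hand side is strictly negative, a contradiction, so $w\ge 0$ in $\Omega_1$.

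The main obstacle I anticipate is bookkeeping. Points of $\overline{\Omega_1}$ may lie on $\{x_n=0\}$, where the principal value at $x_0$ still makes sense because $w\in C^{1,\theta}_{loc}$, so this should be harmless. The reflection identities also require $\tilde{\Sigma}_\lambda$ and $\tilde H_\lambda\setminus\tilde\Sigma_\lambda$ to be the mirror images of $\Sigma_\lambda$ and $H_\lambda\setminus\Sigma_\lambda$, which holds because reflection in $T_\lambda$ preserves $x_n$. The only genuinely new estimate is the $\delta^{-\alpha}$ lower bound, and it is elementary.
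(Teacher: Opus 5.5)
Your proof is correct and follows the paper's argument. You take a negative minimum on $\overline{\Omega_1}$, use the $\mathcal{N}_{\frac{\alpha}{2}}$ computation to keep $w\ge w(x_0)$ on $H_\lambda\cap\{x_n<0\}$, and apply the reflection estimate $(-\Delta)^{\frac{\alpha}{2}}w(x_0)\le 2C_{n,\alpha}w(x_0)\int_{\tilde H_\lambda}|x_0-y|^{-n-\alpha}\,dy$ with a $c\,l^{-\alpha}$ lower bound for the integral. The only differences are minor: your cone region replaces the paper's box $D=\{l<y_1-x^0_1<1,\ |y'-x_0'|<1\}$ and gives the slightly sharper $c\,\delta^{-\alpha}$, and you should add one line noting that $w(x_0)<0$ rules out $x_0\in T_\lambda$ (where $w=0$), so $x_0\in\Omega_1$ and the differential inequality is available at $x_0$.
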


\begin{proof}
If \eqref{Narrow conclusion} does not hold, then the continuity of $w$ and \eqref{lemma2} indicate that there exists some point ${x_0} \in \overline{\Omega_1}$ such that
\begin{linenomath*}\begin{equation}
	w({x_0})=\min_{x\in \overline{\Omega}_1}w(x)<0.
\end{equation}\end{linenomath*}

Since $w(x)$ is non-negative in $H_\lambda\backslash (\Omega_1 \cup \Omega_2)$, one can further deduce that ${x_0}\in \Omega_1$ or ${x_0}\in \Omega_2$. 

If ${x_0}\in \Omega_2$, \eqref{lemma2} tells us that
\begin{linenomath*}\begin{equation}
	\label{Nx0>0}
	\mathcal{N}_{\frac{\alpha}{2}}w({x_0})\geq0.
\end{equation}\end{linenomath*}
By using the fact that $w({x_0})\leq w(x)$ for all $x\in \Sigma_\lambda$, we derive that
\begin{linenomath*}\begin{align}\notag
		\mathcal{N}_{\frac{\alpha}{2}}w({x_0})&=C_{n,\alpha}\int_{\mathbb{R}_+^n}\frac{w({x_0})-w(y)}{|{x_0}-y|^{n+{\alpha}}}dy\\
	&=C_{n,\alpha}\bigg\{ \int_{\Sigma_\lambda}\frac{w({x_0})-w(y)}{|{x_0}-y|^{n+{\alpha}}}dy+\int_{\tilde{\Sigma}_\lambda}\frac{w({x_0})-w(y)}{|{x_0}-y|^{n+{\alpha}}}dy\bigg\}\notag\\
	&=C_{n,\alpha}\bigg\{ \int_{\Sigma_\lambda}\frac{w({x_0})-w(y)}{|{x_0}-y|^{n+{\alpha}}}dy+\int_{\Sigma_\lambda}\frac{w({x_0})-w(y^\lambda)}{|{x_0}-y^\lambda|^{n+{\alpha}}}dy\bigg\}\notag\\
	&=C_{n,\alpha}\bigg\{ \int_{\Sigma_\lambda}\frac{w({x_0})-w(y)}{|{x_0}-y|^{n+{\alpha}}}dy+\int_{\Sigma_\lambda}\frac{w({x_0})+w(y)}{|{x_0}-y^\lambda|^{n+{\alpha}}}dy\bigg\}\notag\\
	&\leq C_{n,\alpha} \int_{\Sigma_\lambda}\bigg\{\frac{w({x_0})-w(y)}{|{x_0}-y^\lambda|^{n+{\alpha}}}+\frac{w({x_0})+w(y)}{|{x_0}-y^\lambda|^{n+{\alpha}}}\bigg\}dy\notag\\
	&=C_{n,\alpha}\int_{\Sigma_\lambda}\frac{2w({x_0})}{|{x_0}-y^\lambda|^{n+{\alpha}}}dy\notag\\
	&<0,\label{est:nw2}
\end{align}\end{linenomath*}
which contradicts inequality \eqref{Nx0>0}. Therefore ${x_0} \in \Omega_1$.

By similar computations as in \eqref{compute1} \eqref{compute2} \eqref{compute3} and \eqref{compute4}, we deduce
\begin{linenomath*}\begin{equation}\label{narrow}
(-\Delta)^\frac{\alpha}{2}w({x_0})+a({x_0})w({x_0})\leq w({x_0})\left( 2C_{n,\alpha}\int_{H_\lambda}\frac{1}{|{x_0}-y^\lambda|^{n+{\alpha}}}dy+a({x_0})\right).
\end{equation}\end{linenomath*}
We now estimate the above integral. 
Let $D=\left\{y\in \mathbb{R}^n \mid l<y_1-x_1^0<1,\ |y'-({x_0})'|<1\right\}$, $s=y_1-x_1^0$, $\tau=|y'-({x_0})'|$ and $\omega_{n-2}=|B_1(0)|$ in $\mathbb{R}^{n-2}$. We have 
\begin{linenomath*}\begin{align}\nonumber
	\int_{H_\lambda}\frac{1}{|{x_0}-y^\lambda|^{n+{\alpha}}}dy 
	&=\int_{\tilde{H}_\lambda}\frac{1}{|{x_0}-y|^{n+{\alpha}}}dy\\
	&\geq \int_D\frac{1}{|{x_0}-y|^{n+{\alpha}}}dy \nonumber\\
	&=\int_l^1\int_0^1\frac{\omega_{n-2}\tau^{n-2}d\tau}{(s^2+\tau^2)^{\frac{n+{\alpha}}{2}}}ds \nonumber\\
   &=\int_l^1\int_0^{\frac{1}{s}}\frac{\omega_{n-2}(st)^{n-2}sdt}{s^{n+{\alpha}}(1+t^2)^{\frac{n+{\alpha}}{2}}}ds\nonumber\\
	&=\int_l^1\frac{1}{s^{1+{\alpha}}}\int_0^{\frac{1}{s}}\frac{\omega_{n-2}t^{n-2}dt}{(1+t^2)^{\frac{n+{\alpha}}{2}}}ds \nonumber \\
	&\geq \int_l^1\frac{1}{s^{1+{\alpha}}}\int_0^1\frac{\omega_{n-2}t^{n-2}dt}{(1+t^2)^{\frac{n+{\alpha}}{2}}}ds \nonumber \\
	&\geq C\int_l^1\frac{1}{s^{1+{\alpha}}}ds\geq\frac{c}{l^{\alpha}}  \label{infty}
\end{align}\end{linenomath*}		
where we have used the substitution $\tau=st$.
	
Combining \eqref{narrow} with \eqref{infty}, we arrive at
\begin{linenomath*}\begin{align}
		(-\Delta)^\frac{\alpha}{2}w({x_0})+a({x_0})w({x_0})
		\leq w({x_0})\left( 2C_{n,\alpha}\frac{c}{l^{\alpha}}+a({x_0})\right).
\end{align}\end{linenomath*}
Notice that $a(x)$ is bounded from below in $\Omega_1$ and $w({x_0})<0$. Therefore, when $l$ is sufficiently small, one has
\begin{linenomath*}$$
(-\Delta)^\frac{\alpha}{2}w({x_0})+a({x_0})w({x_0})< 0,
$$ \end{linenomath*}
which contradicts \eqref{lemma2}.  
\end{proof}

Now we give the proof of Theorem \ref{th1} by using the direct method of moving planes.

\begin{proof}[Proof of Theorem \ref{th1}] The proof is divided into three steps.
\begin{step}\label{step1} For sufficiently negative $\lambda$, 
\begin{linenomath*}\begin{equation}
\label{w>0}
w_{\lambda}(x)\geq 0,\quad \forall x \in H_{\lambda}.
\end{equation}\end{linenomath*}
\end{step}

Recall that for each $\lambda \in \mathbb{R}$, $w_\lambda(x)=u_\lambda(x)-u(x)$ defined as in \eqref{def:w}. 
According to the fact that for all $\varphi \in L_\alpha\cap C_{loc}^{1,\theta}(\mathbb{R}^n)$ (with $\theta>\max\{0,\alpha-1\}$),
\begin{linenomath*}\begin{align}
	((-\Delta)^\frac{\alpha}{2}\varphi_\lambda)(x)& =((-\Delta)^\frac{\alpha}{2}\varphi)(x^\lambda), \quad \forall x\in \Sigma_\lambda, \label{phi}\\
	(\mathcal{N}_{\frac{\alpha}{2}}\varphi_\lambda)(x)&=(\mathcal{N}_{\frac{\alpha}{2}}\varphi)(x^\lambda), \quad \forall x\in \{x_n<0\}\cap H_\lambda. \notag
\end{align}\end{linenomath*}
Therefore, setting $\Omega_1=\{x\in H_\lambda \mid x_n\geq 0,\ w_\lambda(x)<0\}$, it follows from \eqref{phi} and the mean value theorem that, for each $x\in \Omega_1$,
\begin{linenomath*}\begin{equation}
	\label{cx}
	(-\Delta)^\frac{\alpha}{2}w_\lambda(x)+a(x)w_\lambda(x)= 0.
\end{equation}\end{linenomath*}
Here, $a(x)=-f'\big(\psi(\lambda, x)\big)$ where $\psi(\lambda, x)$ is a value between $u(x^\lambda)$ and $u(x)$.
Equation \eqref{cx} holds pointwise on $\Omega_1$: for $x\in\Sigma_\lambda^-$ this follows from the equation in $\mathbb{R}^n_+$, while for $x\in H_\lambda\cap\{x_n=0,\ w_\lambda(x)<0\}$ it follows from the fact that, since $u\in C^{1,\theta}_{loc}(\mathbb{R}^n)$, the operator $(-\Delta)^{\alpha/2}u$ is continuous on $\mathbb{R}^n$, so the equation $(-\Delta)^{\alpha/2}u=f(u)$ extends to $\{x_n=0\}$ by continuity.

By \eqref{udecay} and \eqref{fdecay}, for sufficiently negative $\lambda$ and $x\in \Omega_1$, we know that
$u(x)$ is sufficiently small and
$0<u_\lambda(x)<u(x)$. 
Thus,
\begin{linenomath*}$$
a(x)\geq - \psi^{\gamma}(\lambda, x)>-u^{\gamma}(x).
$$ \end{linenomath*}

The decay assumption \eqref{fdecay} instantly yields that
\begin{linenomath*}\begin{align*}
	\label{lim}
	|x|^{\alpha}a(x)> -u^{\gamma}(x)|x|^{\alpha}\geq -\frac{1}{|x|^{\beta\gamma-\alpha}}.
\end{align*}\end{linenomath*}
Hence,
\begin{linenomath*}\begin{align*}
	\varliminf_{|x|\to\infty } |x|^{\alpha}a(x)\geq 0.
\end{align*}\end{linenomath*}

Moreover, for each $x\in H_\lambda\backslash\Sigma_\lambda$, we have $x_1<\lambda$, hence $|x|\geq|x_1|>|\lambda|$. For sufficiently negative $\lambda$, the decay condition \eqref{udecay} ensures $u(x)<d$. Since $w_\lambda(x)<0$ implies $u(x^\lambda)<u(x)<d$, both values lie in $[0,d)$.

Therefore, by using the assumption that $g$ is non-increasing in $[0,d)$, we have
for each $x\in \left\{x\in H_\lambda\backslash \Sigma_\lambda \mid w_\lambda(x)<0\right\}$,
\begin{linenomath*}\begin{equation}
	\label{est:nw}
	\mathcal{N}_{\frac{\alpha}{2}}w_\lambda(x)=(\mathcal{N}_{\frac{\alpha}{2}}u)(x^\lambda)-(\mathcal{N}_{\frac{\alpha}{2}}u)(x)
	=g\big(u(x^\lambda)\big)-g\big(u(x)\big) 
	\geq0.
\end{equation}\end{linenomath*}

Let $\Omega_2=\{x\in H_\lambda \mid x_n<0,\ w_\lambda(x)<0\}$.
By definition $\Omega_2\subset\{x_n<0\}=\mathbb{R}^n\setminus\overline{\mathbb{R}^n_+}$, so $\mathcal{N}_{\alpha/2}w_\lambda$ is well defined on $\Omega_2$. Condition~\eqref{cx} shows $(-\Delta)^{\alpha/2}w_\lambda+aw_\lambda=0\geq 0$ on $\Omega_1$, and \eqref{est:nw} gives $\mathcal{N}_{\alpha/2}w_\lambda\geq 0$ on $\Omega_2$. By construction $w_\lambda\geq 0$ on $H_\lambda\setminus(\Omega_1\cup\Omega_2)$.
It follows from Lemma \ref{Decay} that there exists a constant $R_0>0$, such that if ${x_0}$ is a negative minimum of $w_\lambda(x)$ in $\Sigma_{\lambda}$, then $|{x_0}|\leq R_0.$

Choosing $\lambda<-R_0$, we have
\begin{linenomath*}\begin{align*}
	w_\lambda(x)\geq 0, \quad \forall x \in \Sigma_\lambda.
\end{align*}\end{linenomath*}

\begin{claim}\label{claim0}
\textup{If} $w_\lambda(x)\geq 0,\forall x \in \Sigma_\lambda$, \textup{then} $w_\lambda(x)\geq 0,\forall x \in H_\lambda$.
\end{claim}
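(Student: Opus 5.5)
We argue by contradiction, in the same spirit as the claim \eqref{eq:lem} inside the proof of Lemma~\ref{Decay}. Since $H_\lambda=\Sigma_\lambda\cup\bigl(H_\lambda\cap\{x_n=0\}\bigr)\cup\bigl(H_\lambda\cap\{x_n<0\}\bigr)$, we first dispose of the boundary hyperplane. On $H_\lambda\cap\{x_n=0\}$ the inequality $w_\lambda\geq 0$ follows from $w_\lambda\geq0$ on $\Sigma_\lambda$ and continuity of $w_\lambda$: every such point is a limit of points of $\Sigma_\lambda$ obtained by letting $x_n\to0^+$. So it remains to treat the points of $H_\lambda$ with $x_n<0$.

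Suppose there is $\bar x\in H_\lambda\cap\{x_n<0\}$ with $w_\lambda(\bar x)<0$. Then $\bar x\in\Omega_2$, and by \eqref{est:nw} (which used $\lambda$ sufficiently negative and the monotonicity of $g$ on $[0,d)$) we have
\[
\mathcal{N}_{\frac{\alpha}{2}}w_\lambda(\bar x)\geq 0 .
\]
On the other hand, we split the integral defining $\mathcal{N}_{\frac{\alpha}{2}}w_\lambda(\bar x)$ over $\mathbb{R}^n_+=\Sigma_\lambda\cup\tilde\Sigma_\lambda$ (up to the null set $T_\lambda$). We change variables $y\mapsto y^\lambda$ on $\tilde\Sigma_\lambda$ and use the antisymmetry $w_\lambda(y^\lambda)=-w_\lambda(y)$. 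This gives
\[
\mathcal{N}_{\frac{\alpha}{2}}w_\lambda(\bar x)=C_{n,\alpha}\int_{\Sigma_\lambda}\Bigl[\frac{w_\lambda(\bar x)-w_\lambda(y)}{|\bar x-y|^{n+\alpha}}+\frac{w_\lambda(\bar x)+w_\lambda(y)}{|\bar x-y^\lambda|^{n+\alpha}}\Bigr]dy .
\]
For $\bar x,y\in H_\lambda$ we have $|\bar x-y|<|\bar x-y^\lambda|$. Since $w_\lambda(y)\geq0$ on $\Sigma_\lambda$ and $w_\lambda(\bar x)<0$, the first numerator is negative. Hence the first kernel may be replaced by the smaller $|\bar x-y^\lambda|^{-(n+\alpha)}$, which yields
\[
\mathcal{N}_{\frac{\alpha}{2}}w_\lambda(\bar x)\leq 2C_{n,\alpha}\,w_\lambda(\bar x)\int_{\Sigma_\lambda}\frac{dy}{|\bar x-y^\lambda|^{n+\alpha}}<0,
\]
since $\Sigma_\lambda$ has positive measure. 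This contradicts the previous display, so $w_\lambda\geq0$ on all of $H_\lambda$.

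The only delicate point is justifying the use of \eqref{est:nw} at $\bar x$. That estimate requires $u(\bar x)<d$ and $u(\bar x^\lambda)<d$, which holds because $|\bar x|>|\lambda|$ is large, $w_\lambda(\bar x)<0$, and $\lambda$ is taken sufficiently negative as in Step~\ref{step1}. Convergence of the integral is harmless because $\bar x_n<0$ keeps $\bar x$ at positive distance from $\mathbb{R}^n_+$, and $w_\lambda\in L_\alpha$. Note that, unlike in Lemma~\ref{Decay}, no minimum point is needed: only the sign information on $\Sigma_\lambda$ is used.
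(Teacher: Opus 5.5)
Your proof is correct and matches the paper's argument essentially step for step. Points on $\{x_n=0\}$ are handled by continuity from $\Sigma_\lambda$. At a point with $x_n<0$, the sign condition \eqref{est:nw} (valid for $\lambda$ sufficiently negative, as you note) is contradicted by the reflection estimate \eqref{est:nw2}, which needs only $w_\lambda\ge 0$ on $\Sigma_\lambda$ and not a minimum point.
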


Suppose for contradiction that there exists ${x_0} \in H_\lambda \backslash \Sigma_\lambda$ such that $w_\lambda({x_0})<0$. If $x_{0,n}=0$, then $x_0+\frac{1}{k}e_n\in\Sigma_\lambda$ for all $k$, and by continuity $w_\lambda(x_0+\frac{1}{k}e_n)\to w_\lambda(x_0)<0$; but $w_\lambda\geq 0$ on $\Sigma_\lambda$, a contradiction. Hence $x_0$ must satisfy $x_{0,n}<0$, i.e.~$x_0\in\mathbb{R}^n\setminus\overline{\mathbb{R}^n_+}$.  

It follows from \eqref{est:nw} that
\begin{linenomath*}\begin{align}\label{est:nw3}
	\mathcal{N}_{\frac{\alpha}{2}}w_\lambda({x_0})\geq0.
\end{align}\end{linenomath*}
On the other hand, using $w_\lambda({x_0})<0\leq w_\lambda(x), \forall x\in \Sigma_\lambda$ and repeating the computation as in \eqref{est:nw2}, we have
\begin{linenomath*}\begin{align*}
	\mathcal{N}_{\frac{\alpha}{2}}w_\lambda({x_0})
	\leq C_{n,\alpha}\int_{\Sigma_\lambda}\frac{2w_\lambda(x_0)}{|x_0-y^\lambda|^{n+{\alpha}}}dy
	<0,
	\end{align*}\end{linenomath*}
which contradicts \eqref{est:nw3}.

The proof of Step \ref{step1} is completed.

Step \ref{step1} provides a starting point, from which we can move the plane $T_\lambda$ to the right as long as inequality \eqref{w>0} holds. 

Define
\begin{linenomath*}$$
\lambda_0=\sup\left\{\lambda\leq0 \mid w_\mu(x)\geq 0,\forall x\in H_\mu,\forall \mu\leq\lambda\right\}.
$$ \end{linenomath*}
Step \ref{step1} tells us that $\lambda_0>-R_0$. By the continuity of $u$, we have 
\begin{linenomath*}\begin{equation}
	\label{eq:nonegw}
w_{\lambda_0}(x)\geq 0, \quad \forall x \in H_{\lambda_0}.
\end{equation}\end{linenomath*} 
The following claim shows that either $w_{\lambda_0}(x)\equiv 0$ or $w_{\lambda_0}(x)> 0$ in $H_{\lambda_0}$.

\begin{claim}\label{claim1}
If there exists a point $x_0\in H_{\lambda_0}$, such that
$w_{\lambda_0}(x_0)=0,$
then
$w_{\lambda_0}(x)\equiv 0,\forall x \in H_{\lambda_0}.$
\end{claim}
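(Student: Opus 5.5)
We argue by contradiction: assume $w_{\lambda_0}(x_0)=0$ at some $x_0\in H_{\lambda_0}$ but $w_{\lambda_0}\not\equiv 0$ in $H_{\lambda_0}$. By \eqref{eq:nonegw} and the antisymmetry $w_{\lambda_0}(x^{\lambda_0})=-w_{\lambda_0}(x)$, we then have $w_{\lambda_0}\ge 0$ on $H_{\lambda_0}$, with $w_{\lambda_0}>0$ on a set of positive measure there; this holds by continuity, since the set where $w_{\lambda_0}>0$ is open and nonempty. The point $x_0$ is a minimum of $w_{\lambda_0}$ over $H_{\lambda_0}$, and we compute the relevant nonlocal operator at $x_0$ using the same reflection splitting as in \eqref{compute2} and \eqref{compute3}.

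\textbf{Case 1: $x_{0,n}\ge 0$.} Then $x_0$ lies in $\overline{\mathbb{R}^n_+}\cap H_{\lambda_0}$, where the equation holds: on $\{x_n>0\}$ directly, and on $\{x_n=0\}$ by the continuity extension used for \eqref{cx}. Since $u(x_0^{\lambda_0})=u(x_0)$, the mean value term vanishes and $(-\Delta)^{\alpha/2}w_{\lambda_0}(x_0)=f(u(x_0^{\lambda_0}))-f(u(x_0))=0$. On the other hand, we split $\mathbb{R}^n=H_{\lambda_0}\cup\tilde H_{\lambda_0}$ and use antisymmetry to get
\begin{linenomath*}$$(-\Delta)^{\alpha/2}w_{\lambda_0}(x_0)=C_{n,\alpha}P.V.\int_{H_{\lambda_0}}\Big(\frac{1}{|x_0-y^{\lambda_0}|^{n+\alpha}}-\frac{1}{|x_0-y|^{n+\alpha}}\Big)w_{\lambda_0}(y)\,dy .$$\end{linenomath*}
For $x_0,y\in H_{\lambda_0}$ we have $|x_0-y^{\lambda_0}|>|x_0-y|$, so the kernel bracket is strictly negative. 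Since $w_{\lambda_0}\ge0$ and $w_{\lambda_0}\not\equiv0$, the integral is strictly negative, which is a contradiction.

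\textbf{Case 2: $x_{0,n}<0$.} Then $\mathcal{N}_{\alpha/2}$ is defined at $x_0$, and $\mathcal{N}_{\alpha/2}w_{\lambda_0}(x_0)=g(u(x_0^{\lambda_0}))-g(u(x_0))=0$. Repeating the splitting over $\mathbb{R}^n_+=\Sigma_{\lambda_0}\cup\tilde\Sigma_{\lambda_0}$ gives
\begin{linenomath*}$$\mathcal{N}_{\alpha/2}w_{\lambda_0}(x_0)=C_{n,\alpha}\int_{\Sigma_{\lambda_0}}\Big(\frac{1}{|x_0-y^{\lambda_0}|^{n+\alpha}}-\frac{1}{|x_0-y|^{n+\alpha}}\Big)w_{\lambda_0}(y)\,dy\le 0 .$$\end{linenomath*}
This is a contradiction provided $w_{\lambda_0}\not\equiv0$ on $\Sigma_{\lambda_0}$.

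\textbf{Main obstacle.} The delicate step is the degenerate situation in Case 2 where $w_{\lambda_0}\equiv0$ on $\Sigma_{\lambda_0}$ but not on $\{x_n<0\}\cap H_{\lambda_0}$. Here the Neumann operator only sees values in $\mathbb{R}^n_+$, so the argument above gives no contradiction. To handle it, I would exploit the vanishing in the half-space. Take any $z\in\Sigma_{\lambda_0}$; then $w_{\lambda_0}(z)=0$, so $z$ is again a zero minimum, and Case 1 applied at $z$ yields
\begin{linenomath*}$$\int_{H_{\lambda_0}}\Big(\frac{1}{|z-y^{\lambda_0}|^{n+\alpha}}-\frac{1}{|z-y|^{n+\alpha}}\Big)w_{\lambda_0}(y)\,dy=0 .$$\end{linenomath*}
This integral ranges over all of $H_{\lambda_0}$, including $\{y_n<0\}$. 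Since the kernel is strictly negative, it forces $w_{\lambda_0}\equiv0$ on $H_{\lambda_0}\cap\{y_n<0\}$ as well. Hence in every case $w_{\lambda_0}\equiv 0$ in $H_{\lambda_0}$, which proves Claim~\ref{claim1}.
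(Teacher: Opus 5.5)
Your proposal is correct and follows the paper's proof. You use the same reflection identity for $(-\Delta)^{\alpha/2}w_{\lambda_0}$ at points with $x_{0,n}\ge 0$, including the continuity extension to $\{x_n=0\}$. In the exterior case you do exactly what the paper does: the Neumann identity first gives $w_{\lambda_0}\equiv 0$ on $\Sigma_{\lambda_0}$, and then restarting the interior argument at a point of $\Sigma_{\lambda_0}$ reaches the region $\{x_n<0\}$.
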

Since we already have \eqref{eq:nonegw}, it suffices to fix a point $x_0\in H_{\lambda_0}$ with $w_{\lambda_0}(x_0)=0$ and consider its position.

If $x_0\in \Sigma_{\lambda_0}$, then $x_0$ is an interior point of $\mathbb{R}^n_+$. On the one hand, we have
\begin{linenomath*}\begin{align*}
	(-\Delta)^\frac{\alpha}{2}w_{\lambda_0}(x_0)&=(-\Delta)^\frac{\alpha}{2}u((x_0)^{\lambda_0})-(-\Delta)^\frac{\alpha}{2}u(x_0)\\
	&=f\big(u((x_0)^{\lambda_0})\big)-f\big(u(x_0)\big)\\
	&=0.
\end{align*}\end{linenomath*}
On the other hand,
\begin{linenomath*}\begin{align}\notag
	(-\Delta)^\frac{\alpha}{2}w_{\lambda_0}(x_0)&=C_{n,\alpha}P.V.\int_{\mathbb{R}^n}\frac{-w_{\lambda_0}(y)}{|x_0-y|^{n+{\alpha}}}dy \\
	&=C_{n,\alpha}P.V.\int_{\Sigma_{\lambda_0}}\frac{-w_{\lambda_0}(y)}{|x_0-y|^{n+{\alpha}}}dy+C_{n,\alpha}P.V.\int_{\tilde{\Sigma}_{\lambda_0}}\frac{-w_{\lambda_0}(y)}{|x_0-y|^{n+{\alpha}}}dy\notag\\
	&+C_{n,\alpha}P.V.\int_{H_{\lambda_0}\backslash \Sigma_{\lambda_0}}\frac{-w_{\lambda_0}(y)}{|x_0-y|^{n+{\alpha}}}dy+C_{n,\alpha}P.V.\int_{\tilde{H}_{\lambda_0}\backslash \tilde{\Sigma}_{\lambda_0}}\frac{-w_{\lambda_0}(y)}{|x_0-y|^{n+{\alpha}}}dy\notag\\
	&=C_{n,\alpha}P.V.\int_{\Sigma_{\lambda_0}}\frac{-w_{\lambda_0}(y)}{|x_0-y|^{n+{\alpha}}}dy+C_{n,\alpha}P.V.\int_{\Sigma_{\lambda_0}}\frac{w_{\lambda_0}(y)}{|x_0-y^{\lambda_0}|^{n+{\alpha}}}dy\notag\\
	&+C_{n,\alpha}P.V.\int_{H_{\lambda_0}\backslash \Sigma_{\lambda_0}}\frac{-w_{\lambda_0}(y)}{|x_0-y|^{n+{\alpha}}}dy+C_{n,\alpha}P.V.\int_{H_{\lambda_0}\backslash \Sigma_{\lambda_0}}\frac{w_{\lambda_0}(y)}{|x_0-y^{\lambda_0}|^{n+{\alpha}}}dy\notag\\
	&=C_{n,\alpha}P.V.\int_{H_{\lambda_0}}\left(\frac{1}{|x_0-y^{\lambda_0}|^{n+{\alpha}}}-\frac{1}{|x_0-y|^{n+{\alpha}}}\right)w_{\lambda_0}(y)dy\leq 0.\label{cl1w<=0}
\end{align}\end{linenomath*}
Here we used that $w_{\lambda_0}(y)\geq 0$ for all $y\in H_{\lambda_0}$ and
\begin{linenomath*}$$
\frac{1}{|x_0-y^{\lambda_0}|} < \frac{1}{|x_0-y|}, \quad \forall y\in H_{\lambda_0}.
$$ \end{linenomath*}
Therefore the above integral must vanish, which forces
\begin{linenomath*}$$w_{\lambda_0}(x)\equiv 0,\quad \forall x \in H_{\lambda_0}.$$ \end{linenomath*}

Now assume that $x_0\in H_{\lambda_0}\backslash \Sigma_{\lambda_0}$. If $x_{0,n}=0$, then $u\in C_{loc}^{1,\theta}(\mathbb{R}^n)$ implies that $(-\Delta)^{\alpha/2}u$ is continuous on $\mathbb{R}^n$, so the equation $(-\Delta)^{\alpha/2}u=f(u)$ extends to $\{x_n=0\}$ by continuity. Hence
\begin{linenomath*}\begin{equation*}
	(-\Delta)^\frac{\alpha}{2}w_{\lambda_0}(x_0)=0.
\end{equation*}\end{linenomath*}
Repeating the computation in the previous case, we again obtain $w_{\lambda_0}(x)\equiv 0$ in $H_{\lambda_0}$.

Finally, assume that $x_{0,n}<0$. Then $x_0\in \mathbb{R}^n\setminus\overline{\mathbb{R}^n_+}$, and the nonlocal Neumann condition yields
\begin{linenomath*}$$
\mathcal{N}_{\frac{\alpha}{2}}w_{\lambda_0}(x_0)=(\mathcal{N}_{\frac{\alpha}{2}}u)(x^{\lambda_0})-(\mathcal{N}_{\frac{\alpha}{2}}u)(x_0)
=g\big(u(x_0^{\lambda_0})\big)-g\big(u(x_0)\big)=0.
$$ \end{linenomath*}
Using the same computation as in \eqref{est:nw2} with $w=w_{\lambda_0}$ and $x_0\in H_{\lambda_0}\backslash\Sigma_{\lambda_0}$, we get
\begin{linenomath*}\begin{align}
	\label{cl1Ns<=0}
	0=\mathcal{N}_{\frac{\alpha}{2}}w_{\lambda_0}({x_0})
	\leq C_{n,\alpha}\int_{\Sigma_{\lambda_0}}\left(\frac{1}{|x_0-y^{\lambda_0}|^{n+{\alpha}}}-\frac{1}{|x_0-y|^{n+{\alpha}}}\right)w_{\lambda_0}(y)dy
  \leq0,
\end{align}\end{linenomath*}
which implies $w_{\lambda_0}(x)\equiv 0$ in $\Sigma_{\lambda_0}$. Since $\Sigma_{\lambda_0}$ is nonempty, we may choose a point $\bar{x}\in \Sigma_{\lambda_0}$; then $w_{\lambda_0}(\bar{x})=0$, and the first case applies to show that $w_{\lambda_0}(x)\equiv 0$ in $H_{\lambda_0}$.
The proof of Claim \ref{claim1} is completed.

We now proceed to prove Theorem \ref{th1}.

\begin{step}\label{step2}
\textup{If} $\lambda_0<0$, \textup{then}
\begin{linenomath*}$$w_{\lambda_0}(x)\equiv 0,\quad \forall x \in H_{\lambda_0}.$$ \end{linenomath*}
\end{step}

By Claim \ref{claim1}, we have 
\begin{linenomath*}$$\text{either } w_{\lambda_0}(x)> 0 \text{ or } w_{\lambda_0}(x)\equiv 0,\quad \forall x \in H_{\lambda_0}.$$ \end{linenomath*}
Assume for contradiction that
\begin{linenomath*}\begin{equation}
	\label{result2}
	w_{\lambda_0}(x)> 0,\quad \forall x\in H_{\lambda_0}.
\end{equation}\end{linenomath*}
It follows from \eqref{result2} that there exists a constant $c_0>0$, such that for a small $\sigma>0$, we have
\begin{linenomath*}$$w_{\lambda_0}(x)\geq c_0,\quad x\in \overline{\Sigma_{\lambda_0 - \sigma}\cap B_{R_0}(0)}.$$ \end{linenomath*}
Since $w_{\lambda}$ depends on $\lambda$ continuously, there exists $\delta>0$ satisfying $\delta+
\lambda_0<0$, such that for all $\lambda \in [\lambda_0,\lambda_0+\delta)$, we have
\begin{linenomath*}\begin{equation}
	\label{eq:pos}
	w_{\lambda}(x)\geq 0,\quad x\in \overline{\Sigma_{\lambda_0 - \sigma}\cap B_{R_0}(0)}.
\end{equation}\end{linenomath*}

For each  $\lambda \in [\lambda_0,\lambda_0+\delta)$, let us
define
\begin{linenomath*}$$\Omega_1=\{x\in H_\lambda \mid x_n\geq 0,\ w_\lambda(x)<0\}\backslash \Sigma_{\lambda_0-\sigma}, \quad \Omega_2=\{x\in H_\lambda \mid x_n<0,\ w_\lambda(x)<0\}.$$ \end{linenomath*}
Since $\Omega_1$ excludes $\Sigma_{\lambda_0-\sigma}=\{x\in\mathbb{R}^n_+\mid x_1<\lambda_0-\sigma\}$ and lies in $H_\lambda=\{x_1<\lambda\}$, we have
\begin{linenomath*}$$\Omega_1\subset \bigl\{x\in\mathbb{R}^n\mid x_n\geq 0,\; \lambda_0-\sigma\leq x_1<\lambda\bigr\} \subset \bigl\{x\in\mathbb{R}^n\mid x_n\geq 0,\; \lambda-(\delta+\sigma)<x_1<\lambda\bigr\},$$ \end{linenomath*}
so $\Omega_1$ is contained in a strip of width $l=\delta+\sigma$ near $T_\lambda$. By choosing $\sigma$ and $\delta$ sufficiently small, the width $l$ satisfies the smallness condition required by Lemma~\ref{Narrow}.
For $x\in \Omega_1\cap \Sigma_\lambda$, the equation in $\mathbb{R}^n_+$ gives $(-\Delta)^{\alpha/2}w_\lambda(x)+a(x)w_\lambda(x)=0$, while for $x\in \Omega_1\cap\{x_n=0\}$ the same identity follows by continuity of $(-\Delta)^{\alpha/2}u$ on $\mathbb{R}^n$. Thus Lemma \ref{Narrow} applies.
According to Lemma \ref{Narrow} and \eqref{eq:pos}, we know
\begin{linenomath*}$$w_{\lambda}(x)\geq 0,\quad \forall x\in H_{\lambda}.$$ \end{linenomath*}

Now we show that the plane can be moved further to the right while preserving inequality \eqref{w>0}, which contradicts the definition of $\lambda_0$.
This completes the proof of Step \ref{step2}.

\begin{step}\label{step3}
The solution $u$ is axially symmetric about a line parallel to the $x_n$-axis.
\end{step}

So far, we have proved that if $\lambda_0<0$, then $u$ is symmetric about the plane $T_{\lambda_0}$.

If $\lambda_0=0$, we move the plane $T_{\lambda}$ from the $+\infty$ to the left. Define
\begin{linenomath*}$$\lambda_0'=\inf\{\lambda \geq0 \mid w_\mu(x)\leq 0,\ \forall x\in H_\mu,\forall \mu\geq\lambda\}.$$ \end{linenomath*}
If $\lambda_0'>0$, through a similar argument as in the previous step, one can show that $w_{\lambda_0'}(x)\equiv 0$, which implies $u$ is symmetric about the plane $T_{\lambda_0'}$. If $\lambda_0'=0=\lambda_0$, then  $w_{\lambda_0}(x)\geq 0, \forall x \in H_0$, and $w_{\lambda_0}(x)\leq 0, \forall x \in H_0$. So we must have $w_{\lambda_0}(x)\equiv 0, \forall x \in H_0$, i.e. $u$ is symmetric about the plane $T_{0}$.

Since the $x_1$ direction can be chosen arbitrarily in $\mathbb{R}^{n-1}$, we conclude that $u$ is axially symmetric about a line parallel to the $x_n$-axis. This completes the proof of the theorem.
\end{proof}

\section{Symmetry results for nonlocal Neumann fractional Laplacian systems} \label{sec:sys} 

In addressing the nonlocal Neumann fractional Laplacian system \eqref{system}, we employ a direct method of moving planes akin to the one utilized in Section \ref{sec:eq}. However, to effectively handle the coupling inherent in the system, we find it imperative to develop a refined version of the decay at infinity and narrow region principles.

We will use $T_\lambda$, $H_\lambda$, $\Sigma_\lambda$ and $x^\lambda$ in the same way as in Section \ref{sec:eq}. 
Define the reflected functions by $u_\lambda(x)=u(x^\lambda)$, $v_\lambda(x)=v(x^\lambda)$ and introduce functions
\begin{linenomath*}$$
U_\lambda(x)=u_\lambda(x)-u(x),\quad V_\lambda(x)=v_\lambda(x)-v(x).
$$ \end{linenomath*}
It follows from \eqref{phi} that for all $x \in \Sigma_\lambda$,
\begin{linenomath*}\begin{align}\label{-delu=f}
	(-\Delta)^\frac{\alpha}{2}U_\lambda(x)
	&=f\big(u(x^\lambda),v(x^\lambda)\big)-f\big(u(x),v(x)\big)\\
	&=\frac{\partial f}{\partial u}(\xi_1(x,\lambda),v(x))U_\lambda(x)+\frac{\partial f}{\partial v}(u_\lambda(x),\eta_1(x,\lambda))V_\lambda(x), \notag 
\end{align}\end{linenomath*}
and
\begin{linenomath*}\begin{align}
	(-\Delta)^\frac{\alpha}{2}V_\lambda(x)=\frac{\partial g}{\partial u}(\xi_2(x,\lambda),v_\lambda(x))U_\lambda(x)+\frac{\partial g}{\partial v}(u(x),\eta_2(x,\lambda))V_\lambda(x)\label{-delv=g},
\end{align}\end{linenomath*}
where $\xi_i(x,\lambda)$ is between $u_\lambda(x)$ and $u(x)$, and $\eta_i(x,\lambda)$ is between $v_\lambda(x)$ and $v(x)$ for $i=1,2.$

Furthermore, we can deduce that for all $x\in \mathbb{R}^n\setminus\overline{\mathbb{R}^n_+}$ with $x\in H_\lambda$,
\begin{linenomath*}\begin{align}\label{nsu=h}
	\mathcal{N}_{\frac{\alpha}{2}}U_\lambda(x)
	&=h\big(u(x^\lambda),v(x^\lambda)\big)-h\big(u(x),v(x)\big) \\
	&=\frac{\partial h}{\partial u}(\xi_3(x,\lambda),v(x))U_\lambda(x)+\frac{\partial h}{\partial v}(u_\lambda(x),\eta_3(x,\lambda))V_\lambda(x),\notag 
\end{align}\end{linenomath*}
and
\begin{linenomath*}\begin{align}
	\mathcal{N}_{\frac{\alpha}{2}}V_\lambda(x)=\frac{\partial l}{\partial u}(\xi_4(x,\lambda),v_\lambda(x))U_\lambda(x)+\frac{\partial l}{\partial v}(u(x),\eta_4(x,\lambda))V_\lambda(x)\label{nsv=l}.
\end{align}\end{linenomath*}
Here, $\xi_i(x,\lambda)$ is between $u_\lambda(x)$ and $u(x)$, and $\eta_i(x,\lambda)$ is between $v_\lambda(x)$ and $v(x)$ for $i=3,4.$

Define 
\begin{linenomath*}$$H_\lambda^{U-}:=\left\{x\in H_\lambda \mid U_\lambda(x)<0\right\},\quad H_\lambda^{V-}:=\left\{x\in H_\lambda \mid V_\lambda(x)<0\right\}.$$ \end{linenomath*}

To prove Theorem \ref{fczth1}, the main ingredients are the following maximum principles, i.e., Decay at infinity and Narrow region principle for systems.
\begin{lemma}(Decay at Infinity Principle for System)
\label{zul1}
	Let $(u,v) \in \left(L_\alpha\cap C_{loc}^{1,\theta}(\mathbb{R}^n)\right)^2$, for some $\theta>\max\{0,\alpha-1\}$, be a positive solution of system \eqref{system} with $f,g,h,l\in C^1([0,+\infty)\times[0,+\infty),\mathbb{R})$. Under assumptions (i)-(vii) as in Theorem \ref{fczth1}, there exists a constant $R_0>0$, for system \eqref{-delu=f}\eqref{-delv=g}\eqref{nsu=h}\eqref{nsv=l} with all $\lambda<0$,\\
(a) if there is $x^*\in {H}_\lambda$, $|x^*|>R_0$ such that $U_\lambda(x^*)=\min_{x\in \overline{H_\lambda}}U_\lambda(x)<0,$
then 
\begin{linenomath*}$$V_\lambda(x^*)<\mu U_\lambda(x^*)<0;$$ \end{linenomath*}
(b) if there is $y^*\in {H}_\lambda$, $|y^*|>R_0$ such that 
$V_\lambda(y^*)=\min_{y\in \overline{H_\lambda}}V_\lambda(y)<0,$
then 
\begin{linenomath*}$$U_\lambda(y^*)<\nu V_\lambda(y^*)<0.$$ \end{linenomath*}
\end{lemma}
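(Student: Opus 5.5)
We prove part (a); part (b) is symmetric, with the roles of $(u,f,h,\mu)$ and $(v,g,l,\nu)$ exchanged. We argue by contradiction: let $U_\lambda(x^*)=\min_{\overline{H_\lambda}}U_\lambda<0$ with $|x^*|>R_0$, and assume $V_\lambda(x^*)\geq\mu U_\lambda(x^*)$. The constant $R_0$ will be fixed at the end, depending only on $a,b,p,q,r,s,\mu,\nu,n,\alpha$ through (i)--(vii), hence independently of $\lambda$. We split according to the position of $x^*$: either $x^*_n\geq 0$ (interior of $\mathbb{R}^n_+$, or on $\{x_n=0\}$, where the equation holds by continuity of $(-\Delta)^{\alpha/2}u$ as in the proof of Theorem~\ref{th1}), or $x^*_n<0$ (where the Neumann relation \eqref{nsu=h} applies).

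\emph{Case $x^*_n\geq 0$.} Since $U_\lambda$ is antisymmetric and attains its global minimum over $H_\lambda$ at $x^*$, the computations \eqref{compute1}--\eqref{compute4} apply verbatim; here we do not even need the auxiliary claim \eqref{eq:lem}, because $x^*$ is already the minimum over all of $H_\lambda$. Together with the lower bound \eqref{sec1jifen}, this gives
\[
(-\Delta)^{\alpha/2}U_\lambda(x^*)\leq \frac{c_0\,U_\lambda(x^*)}{|x^*|^\alpha},\qquad c_0=\frac{2\omega_nC_{n,\alpha}}{4^{n+\alpha}}.
\]
We then bound the right-hand side of \eqref{-delu=f} from below. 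Since $U_\lambda(x^*)<0$, we have $u_\lambda<u$ there, so $\xi_1\leq u$. By (ii) and $U_\lambda<0$,
\[
\tfrac{\partial f}{\partial u}(\xi_1,v)\,U_\lambda\geq u^{p-1}v^q\,U_\lambda .
\]
For the coupling term there are two subcases.
\begin{itemize}
\item If $V_\lambda(x^*)\geq 0$, then $\tfrac{\partial f}{\partial v}V_\lambda\geq 0$ by (iv).
\item If $\mu U_\lambda\leq V_\lambda<0$, then $\eta_1\leq v$, and (iii), (iv) give $\tfrac{\partial f}{\partial v}V_\lambda\geq \mu\, u^pv^{q-1}U_\lambda$.
\end{itemize}
Dividing by $U_\lambda(x^*)<0$ yields
\[
\frac{c_0}{|x^*|^\alpha}\leq u^{p-1}v^q+\mu u^pv^{q-1}\leq |x^*|^{-(ap+bq-a)}+\mu|x^*|^{-(ap+bq-b)}
\]
by (i). By (vii) both exponents exceed $\alpha$, so this fails once $|x^*|>R_0$ with $R_0$ large. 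Here we also take $R_0$ large enough that $u,v$ lie in the small-value regime where (ii)--(iii) hold.

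\emph{Case $x^*_n<0$.} As in \eqref{est:nw2}, using that $x^*$ is the minimum over $H_\lambda\supset\Sigma_\lambda$,
\[
\mathcal{N}_{\alpha/2}U_\lambda(x^*)\leq 2C_{n,\alpha}U_\lambda(x^*)\int_{\Sigma_\lambda}|x^*-y^\lambda|^{-n-\alpha}\,dy<0 .
\]
On the other hand, \eqref{nsu=h} expresses $\mathcal{N}_{\alpha/2}U_\lambda$ as $h_uU_\lambda+h_vV_\lambda$. To use (vi), whose two partial derivatives must be evaluated at the same point, we rewrite the difference $h(u_\lambda,v_\lambda)-h(u,v)$ in integral form along the segment joining the two points:
\[
h(u_\lambda,v_\lambda)-h(u,v)=\int_0^1\Bigl(\partial_uh(\gamma(t))\,U_\lambda+\partial_vh(\gamma(t))\,V_\lambda\Bigr)dt .
\]
Along this segment (v) gives $\partial_vh>0$, and $V_\lambda\geq\mu U_\lambda$ gives $\partial_vh\,V_\lambda\geq\mu\,\partial_vh\,U_\lambda$. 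By (vi), $\mu\,\partial_vh\leq-\partial_uh$; multiplying by $U_\lambda<0$ gives $\mu\,\partial_vh\,U_\lambda\geq-\partial_uh\,U_\lambda$. Hence the integrand is $\geq 0$, so $\mathcal{N}_{\alpha/2}U_\lambda(x^*)\geq 0$, a contradiction. Thus $V_\lambda(x^*)<\mu U_\lambda(x^*)<0$.

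The main obstacle is precisely this bookkeeping of where the derivatives are evaluated. The pointwise mean value forms \eqref{-delu=f} and \eqref{nsu=h} use different intermediate points for the two partials, so in the Neumann case one must switch to the line-integral representation to apply (vi) coherently. In the equation case one must check that the intermediate points $\xi_1,\eta_1$ are dominated by $u,v$, which follows from the signs of $U_\lambda,V_\lambda$, so that the decay (i) transfers to the coefficients. Positivity of $u,v$ keeps all arguments in $\mathbb{R}^+\times\mathbb{R}^+$, where (iv)--(vi) hold.
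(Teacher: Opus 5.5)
Your proof is correct, and its skeleton is the paper's. You split on the sign of $x^*_n$. For $x^*_n\geq 0$ you use the antisymmetric global-minimum estimate together with \eqref{sec1jifen}, and for $x^*_n<0$ you use the estimate of type \eqref{est:nw2}.

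For $x^*_n\geq 0$ the difference is only organizational. The paper argues directly: it first shows $b_1(x^*,\lambda)\geq c|x^*|^{-\alpha}>0$, so (iv) forces $V_\lambda(x^*)<0$. Only then, with $\eta_1\leq v$ available, does it use (iii) to get $b_1/\partial_v f\geq c|x^*|^{ap+bq-b-\alpha}\to\infty$. Your contradiction hypothesis $V_\lambda(x^*)\geq\mu U_\lambda(x^*)$, with the split on the sign of $V_\lambda(x^*)$, uses exactly the same ingredients. It packages them into the single inequality $c_0|x^*|^{-\alpha}\leq u^{p-1}v^q+\mu u^pv^{q-1}$, which is slightly shorter.

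For $x^*_n<0$ your route is genuinely better. In \eqref{conclusion} the paper uses $-\partial_u h(\xi_3,v)\geq\mu\,\partial_v h(u_\lambda,\eta_3)$. However, (vi) only compares the two partials at a common point, while the mean-value form \eqref{nsu=h} evaluates them at different intermediate points. So that inequality does not follow from (vi), and it can in fact fail. Take $h(u,v)=-e^{u-v}$, $\mu=1$, $(u,v)=(2,5)$ and $(u_\lambda,v_\lambda)=(1,3.8)$. Then $h(u_\lambda,v_\lambda)<h(u,v)$ and $V_\lambda<\mu U_\lambda$, yet $-\partial_u h(\xi_3,v)/\partial_v h(u_\lambda,\eta_3)\approx 0.89<\mu$.

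Your integral representation along the segment stays in the open quadrant by positivity. It applies (v)--(vi) at a single point and so makes this step rigorous. The same repair is needed where the paper reuses this case, namely for $x^*_n<0$ in Lemma~\ref{zul2}.

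A cosmetic remark: your $R_0$ depends not only on the exponents but also on the radius beyond which (i) holds and on the smallness thresholds in (ii)--(iii). It is still independent of $\lambda$, which is all that is used later.
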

We remark that the constants $\mu,\nu>0$ with $\mu\nu>1$ are the constants from assumption (vi) of Theorem~\ref{fczth1}.

\begin{proof}
Let $x^*\in {H}_\lambda$ satisfy
\begin{linenomath*}$$U_\lambda(x^*)=\min_{x\in \overline{H_\lambda}}U_\lambda(x)<0.$$ \end{linenomath*}
There are two possible cases: $x^*_n\geq 0$ and $x^*_n<0$.

\textbf{Case 1.} $x^*_n\geq 0$. Note that when $x^*_n=0$, since $(u,v)\in (C^{1,\theta}_{loc}(\mathbb{R}^n))^2$, the operator $(-\Delta)^{\alpha/2}$ is continuous on $\mathbb{R}^n$, so equation \eqref{-delu=f} extends to $\{x_n=0\}$ by continuity.

Noticing that $x^*$ is the minimum point of $U_\lambda(x)$ in ${H}_\lambda$, we compute
\begin{linenomath*}\begin{align}
		(-\Delta)^\frac{\alpha}{2}U_\lambda(x^*) &=C_{n,\alpha}P.V.\int_{\mathbb{R}^n}\frac{U_\lambda(x^*)-U_\lambda(y)}{|x^*-y|^{n+{\alpha}}}dy \notag\\
		&=C_{n,\alpha}P.V.\left( \int_{H_\lambda}\frac{U_\lambda(x^*)-U_\lambda(y)}{|x^*-y|^{n+{\alpha}}}dy+\int_{\tilde{H}_\lambda}\frac{U_\lambda(x^*)-U_\lambda(y)}{|x^*-y|^{n+{\alpha}}}dy\right)\notag\\
		&=C_{n,\alpha}P.V.\left( \int_{H_\lambda}\frac{U_\lambda(x^*)-U_\lambda(y)}{|x^*-y|^{n+{\alpha}}}dy+\int_{H_\lambda}\frac{U_\lambda(x^*)-U_\lambda(y^\lambda)}{|x^*-y^\lambda|^{n+{\alpha}}}dy\right)\notag\\
		&=C_{n,\alpha}P.V.\left( \int_{H_\lambda}\frac{U_\lambda(x^*)-U_\lambda(y)}{|x^*-y|^{n+{\alpha}}}dy+\int_{H_\lambda}\frac{U_\lambda(x^*)+U_\lambda(y)}{|x^*-y^\lambda|^{n+{\alpha}}}dy\right)\notag\\
		&\leq C_{n,\alpha} \int_{H_\lambda}\left(\frac{U_\lambda(x^*)-U_\lambda(y)}{|x^*-y^\lambda|^{n+{\alpha}}}+\frac{U_\lambda(x^*)+U_\lambda(y)}{|x^*-y^\lambda|^{n+{\alpha}}}\right)dy\notag\\
		&=2C_{n,\alpha}\int_{H_\lambda}\frac{1}{|x^*-y^\lambda|^{n+{\alpha}}}dy\cdot U_\lambda(x^*).\label{zul1compute2}
	\end{align}\end{linenomath*}
Here, we have used the anti-symmetry property of $U_\lambda(x)$.

Combining \eqref{zul1compute2} with \eqref{-delu=f}, we conclude 
\begin{linenomath*}\begin{equation}
\label{f/v<=b}
	\frac{\partial f}{\partial v}(u_\lambda(x^*),\eta_1(x^*,\lambda))V_\lambda(x^*)\leq b_1(x^*,\lambda)U_\lambda(x^*),
\end{equation}\end{linenomath*}
where
\begin{linenomath*}$$b_1(x^*,\lambda)=2C_{n,\alpha}\int_{H_\lambda}\frac{1}{|x^*-y^\lambda|^{n+{\alpha}}}dy-\frac{\partial f}{\partial u}(\xi_1(x^*,\lambda),v(x^*)).$$ \end{linenomath*} 

We claim that $b_1(x^*,\lambda)>0$, for sufficiently large $|x^*|$. In fact, for all $\lambda\leq0$, it follows from \eqref{sec1jifen} that
\begin{linenomath*}\begin{equation}
\label{b_1>=}
b_1(x^*,\lambda)\geq \frac{2C_{n,\alpha}\omega_n}{4^{n+{\alpha}}|x^*|^{\alpha}}-\frac{\partial f}{\partial u}(\xi_1(x^*,\lambda),v(x^*)).
\end{equation}\end{linenomath*}
Owing to assumption (ii), we choose $\delta>0$ such that $\frac{\partial f}{\partial u}(u,v)\leq u^{p-1}v^q$ provided that $u+v<\delta$ and $u,v>0$.
For this particular $\delta$, because of assumption (i), there exists $R_1>0$ such that $0<u(x^*)< {1}/{|x^*|^a},$ $0<v(x^*)<{1}/{|x^*|^b}$ and $u(x^*)+v(x^*)<\delta$, when $|x^*|>R_1$. In addition, since $x^* \in H_\lambda^{U-}$, we have $0<u_\lambda(x^*)<\xi_1(x^*,\lambda)<u(x^*)$. Hence, when $|x^*|>R_1$,
\begin{linenomath*}\begin{align}
	\frac{\partial f}{\partial u}(\xi_1(x^*,\lambda),v(x^*))&\leq \xi_1(x^*,\lambda)^{p-1}v(x^*)^q \notag\\
	&< u(x^*)^{p-1}v(x^*)^q \notag \\
	&\leq \frac{1}{|x^*|^{a(p-1)+qb}}.\label{fu<=}
\end{align}\end{linenomath*}
Putting \eqref{fu<=} into \eqref{b_1>=} and using assumption (vii), we can choose $R_2>R_1$, such that for $|x^*|>R_2$,
\begin{linenomath*}\begin{equation}
\label{b_1}
	b_1(x^*,\lambda)\geq c|x^*|^{-\alpha}>0.
\end{equation}\end{linenomath*}

Thus, combining \eqref{b_1} with assumption (iv), we derive from \eqref{f/v<=b} that
\begin{linenomath*}$$V_\lambda(x^*)\leq \frac{b_1(x^*,\lambda)}{\frac{\partial f}{\partial v}(u_\lambda(x^*),\eta_1(x^*,\lambda))} U_\lambda(x^*)<0,$$ \end{linenomath*}
which implies $x^* \in H_\lambda^{U-}\cap H_\lambda^{V-}$.

Due to assumption (iii), there is $\delta'>0$, such that $\frac{\partial f}{\partial v}(u,v)\leq u^pv^{q-1}$ for $u,v>0$, $u+v<\delta'$. Hence, we can choose $R_3>R_2$ so that for all $|x^*|>R_3$
\begin{linenomath*}$$0<u(x^*)< {1}/{|x^*|^a},\quad 0<v(x^*)<{1}/{|x^*|^b},\quad u(x^*)+v(x^*)<\delta'.$$ \end{linenomath*}
Since we have already shown that $x^* \in H_\lambda^{U-}\cap H_\lambda^{V-}$,  there hold $0<u_\lambda(x^*)<\xi_1(x^*,\lambda)<u(x^*)$, $0<v_\lambda(x^*)<\eta_1(x^*,\lambda)<v(x^*)$.
It follows that for $|x^*|>R_3$
\begin{linenomath*}\begin{align}
	\frac{\partial f}{\partial v}(u_\lambda(x^*),\eta_1(x^*,\lambda))&\leq u_\lambda(x^*)^p\eta_1(x^*,\lambda)^{q-1} \notag\\
	&< u(x^*)^pv(x^*)^{q-1} \notag \\
	&\leq \frac{1}{|x^*|^{ap+qb-b}}.\label{fv<=}
\end{align}\end{linenomath*}
In view of \eqref{b_1} \eqref{fv<=} and assumption (vii), we obtain for $|x^*| > R_3$,
\begin{linenomath*}$$\frac{b_1(x^*,\lambda)}{\frac{\partial f}{\partial v}(u_\lambda(x^*),\eta_1(x^*,\lambda))}>c|x^*|^{ap+b(q-1)-\alpha}\rightarrow \infty,\quad \text{as }|x^*|\rightarrow \infty.$$ \end{linenomath*}
Therefore, there is $R_0>R_3$, for all $|x^*|>R_0$, we have
\begin{linenomath*}$$\frac{b_1(x^*,\lambda)}{\frac{\partial f}{\partial v}(u_\lambda(x^*),\eta_1(x^*,\lambda))}>\mu,$$ \end{linenomath*}
which implies 
\begin{linenomath*}$$V_\lambda(x^*)<\mu U_\lambda(x^*)<0.$$ \end{linenomath*}

We remark that $R_0$ is independent of $\lambda$.

\textbf{Case 2.} $x^*_n<0$, i.e., $x^*\in \mathbb{R}^n\setminus\overline{\mathbb{R}^n_+}$.

A similar calculation as in \eqref{est:nw2} gives us that
\begin{linenomath*}\begin{equation}
	\label{nsU<0}
	\mathcal{N}_{\frac{\alpha}{2}}U_\lambda(x^*)\leq 2C_{n,\alpha}\int_{\Sigma_\lambda}\frac{1}{|x^*-y^\lambda|^{n+{\alpha}}}dy\cdot U_\lambda(x^*) <0.
\end{equation}\end{linenomath*}

Combining the above estimate with \eqref{nsu=h}, we find
\begin{linenomath*}\begin{equation}
\label{v<u}
\frac{\partial h}{\partial v}(u_\lambda(x^*),\eta_3(x^*,\lambda))V_\lambda(x^*)<-\frac{\partial h}{\partial u}(\xi_3(x^*,\lambda),v(x^*))U_\lambda(x^*).
\end{equation}\end{linenomath*}
It follows from assumptions (v)(vi) and  \eqref{v<u} that 
\begin{linenomath*}\begin{equation}
\label{conclusion}
V_\lambda(x^*)<\frac{-\frac{\partial h}{\partial u}(\xi_3(x^*,\lambda),v(x^*))}{\frac{\partial h}{\partial v}(u_\lambda(x^*),\eta_3(x^*,\lambda))}U_\lambda(x^*)\leq \mu U_\lambda(x^*)<0.
\end{equation}\end{linenomath*}
Thus, the proof of conclusion (a) is completed.

The proof of conclusion (b) is similar and we shall omit it.
\end{proof}

\begin{lemma}(Narrow Region Principle For System) \label{zul2}
	Let $(u,v) \in \left(L_\alpha\cap C_{loc}^{1,\theta}(\mathbb{R}^n)\right)^2$, for some $\theta>\max\{0,\alpha-1\}$, be a positive solution of system \eqref{system} with $f,g,h,l\in C^1([0,+\infty)\times[0,+\infty),\mathbb{R})$. Assume 
\begin{linenomath*}$$(i')\lim_{|x|\to\infty } u(x)= 0,\quad \lim_{|x|\to\infty } v(x)= 0,$$ \end{linenomath*}
and (iv)-(vi) as in Theorem \ref{fczth1}. Then there exists $l_0>0$ such that for each $l\in(0, l_0]$, $U_\lambda(x)$ and $V_\lambda(x)$ satisfy:\\
(c) if there is $x^*\in \Omega_{\lambda,l}:=\{x\in {H}_\lambda \mid \lambda-l<x_1<\lambda\}$ such that $U_\lambda(x^*)=\min_{x\in \overline{H_\lambda}}U_\lambda(x)<0,$
then 
\begin{linenomath*}$$V_\lambda(x^*)<\mu U_\lambda(x^*)<0;$$ \end{linenomath*}
(d) if there is $y^*\in \Omega_{\lambda,l}$ such that $V_\lambda(y^*)=\min_{y\in \overline{H_\lambda}}V_\lambda(y)<0,$ 
then 
\begin{linenomath*}$$U_\lambda(y^*)<\nu V_\lambda(y^*)<0.$$ \end{linenomath*}
\end{lemma}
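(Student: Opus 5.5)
We follow the proof of Lemma~\ref{zul1}, replacing the decay estimate \eqref{sec1jifen} by the narrow-strip estimate \eqref{infty}. We prove (c); (d) follows by exchanging the roles of $(u,f,h,\mu)$ and $(v,g,l,\nu)$. Let $x^*\in\Omega_{\lambda,l}$ be a point where $U_\lambda(x^*)=\min_{\overline{H_\lambda}}U_\lambda<0$. We split according to the sign of $x^*_n$.

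\textbf{Case 1: $x^*_n<0$.} This case needs no smallness of $l$. The computation \eqref{est:nw2} gives \eqref{nsU<0}, namely $\mathcal{N}_{\frac{\alpha}{2}}U_\lambda(x^*)<0$. Inserting this into \eqref{nsu=h} and using (v) and (vi) exactly as in \eqref{v<u}--\eqref{conclusion} yields $V_\lambda(x^*)<\mu U_\lambda(x^*)<0$.

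\textbf{Case 2: $x^*_n\ge 0$.} When $x^*_n=0$, identity \eqref{-delu=f} holds by continuity, as in Lemma~\ref{zul1}. The anti-symmetry computation \eqref{zul1compute2} then gives
\[
\frac{\partial f}{\partial v}(u_\lambda(x^*),\eta_1)\,V_\lambda(x^*)\le b_1(x^*,\lambda)\,U_\lambda(x^*),
\]
where
\[
b_1(x^*,\lambda)=2C_{n,\alpha}\int_{H_\lambda}\frac{dy}{|x^*-y^\lambda|^{n+\alpha}}-\frac{\partial f}{\partial u}(\xi_1,v(x^*)).
\]
Since $\lambda-x^*_1<l$, the computation \eqref{infty} (with the box $D$ placed at distance $\ge l$ from $x^*$ inside $\tilde H_\lambda$) shows that the integral is at least $c\,l^{-\alpha}$.

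We next bound $\partial_u f$ and $\partial_v f$ from above uniformly. By (i') together with the positivity and continuity of $u$ and $v$, both functions are bounded on $\mathbb{R}^n$, say by $M$. Hence $\xi_1\in[0,M]$, $\eta_1\in[0,M]$ and $u_\lambda\in[0,M]$, and since $f\in C^1$, both $\partial_u f$ and $\partial_v f$ are bounded above by a constant $K$ on $[0,M]^2$. This gives $b_1\ge c\,l^{-\alpha}-K>0$ for $l$ small. Because $\partial_v f>0$ by (iv), we obtain
\[
V_\lambda(x^*)\le \frac{b_1}{\partial_v f}\,U_\lambda(x^*)<0,
\qquad\text{with}\qquad
\frac{b_1}{\partial_v f}\ge \frac{c\,l^{-\alpha}-K}{K}.
\]
Choose $l_0$ so small that this ratio exceeds $\mu$, and also exceeds the corresponding quantity for $\nu$ in part (d). Then $V_\lambda(x^*)<\mu U_\lambda(x^*)$, which proves (c).

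\textbf{Main obstacle.} The delicate point is the uniformity of $K$. We need an upper bound on $\partial_v f$ and a lower bound on $-\partial_u f$ that are independent of $\lambda$ and $x^*$. The upper bound on $\partial_v f$ also has to be strictly positive, so that dividing by it does not destroy the inequality. Boundedness of $u$ and $v$, which follows from (i') and continuity, places all intermediate values in a compact set, and there the $C^1$ bounds are uniform. This is exactly why the decay condition is weakened to (i') in this lemma. We also have to check that the boundary case $x^*_n=0$ and the points of $\Omega_{\lambda,l}$ with $x_n<0$ are covered by Cases 2 and 1 respectively, which they are.
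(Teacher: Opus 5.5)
Your proposal is correct and follows essentially the same route as the paper. The exterior case $x^*_n<0$ is handled exactly as Case 2 of Lemma~\ref{zul1}, and for $x^*_n\ge 0$ you combine the anti-symmetry estimate \eqref{zul1compute2} with the narrow-strip bound $\int_{H_\lambda}|x^*-y^\lambda|^{-n-\alpha}\,dy\ge c\,l^{-\alpha}$ and with uniform $C^1$ bounds on the compact range $[0,M]^2$ supplied by (i'), which is precisely the paper's argument. One minor point you inherit from the paper: (vi) compares $\partial_u h$ and $\partial_v h$ at the same point, so in Case 1 it is cleaner to write $h(u_\lambda,v_\lambda)-h(u,v)=\nabla h(\zeta)\cdot(U_\lambda,V_\lambda)$ with a single $\zeta$ on the segment joining $(u(x^*),v(x^*))$ and $(u_\lambda(x^*),v_\lambda(x^*))$, which lies in the open positive quadrant, rather than using the two-point splitting in \eqref{nsu=h}.
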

\begin{proof}
Without loss of generality, let 
\begin{linenomath*}$$x^*\in \Omega_{\lambda,l}\quad \text{and} \quad U_\lambda(x^*)=\min_{x\in \overline{H_\lambda}}U_\lambda(x)<0,$$ \end{linenomath*}
for some $l>0$.

If $x^*_n<0$, we establish the validity of statement \textit{(c)}, by repeating the arguments presented in \textbf{Case 2.} of the proof for Lemma \ref{zul1}.

We now examine the scenario where $x^*_n\geq 0$. When $x^*_n=0$, equation \eqref{-delu=f} extends to $\{x_n=0\}$ by continuity of $(-\Delta)^{\alpha/2}$, so the following argument applies uniformly for all $x^*_n\geq 0$.
Applying the identical computation as presented in \eqref{zul1compute2} and \eqref{f/v<=b}, we deduce that
\begin{linenomath*}\begin{equation}
\label{f/v<=b1u}
	\frac{\partial f}{\partial v}(u_\lambda(x^*),\eta_1(x^*,\lambda))V_\lambda(x^*)\leq b_1(x^*,\lambda)U_\lambda(x^*),
\end{equation}\end{linenomath*}
where
\begin{linenomath*}$$b_1(x^*,\lambda)=2C_{n,\alpha}\int_{H_\lambda}\frac{1}{|x^*-y^\lambda|^{n+{\alpha}}}dy-\frac{\partial f}{\partial u}(\xi_1(x^*,\lambda),v(x^*)).$$ \end{linenomath*}

We assert that for sufficiently small $l$, $b_1(x^*,\lambda)>0$. By a similar argument as in \eqref{infty}, we deduce that as $l\rightarrow0^+$, 
\begin{linenomath*}$$
\int_{H_\lambda}\frac{1}{|x^*-y^\lambda|^{n+{\alpha}}}dy\rightarrow +\infty.
$$ \end{linenomath*}
Simultaneously, thanks to assumption $(i')$, $u(x)$ and $v(x)$ are bounded functions on $\mathbb{R}^n$, implying that $\xi_1(x,\lambda)$ is also bounded. Due to $f\in C^1([0,+\infty)\times[0,+\infty),\mathbb{R})$, there exists some $c>0$ such that 
\begin{linenomath*}$$\left|\frac{\partial f}{\partial u}(\xi_1(x^*,\lambda),v(x^*))\right|<c,\quad \text{for all } \lambda.$$ \end{linenomath*}
Therefore, there exists $l_1>0$ such that for all $0<l\leq l_1$, we have $b_1(x^*,\lambda)>0$. In fact, we obtain
\begin{linenomath*}\begin{equation}
\label{b1toinfty}
  \lim_{l\to 0^+}b_1(x^*,\lambda)=+\infty.
\end{equation}\end{linenomath*}
Combining this estimate with assumption (iv), we deduce from \eqref{f/v<=b1u} that for all $0<l<l_1$,
\begin{linenomath*}\begin{equation}
\label{V<=U}
V_\lambda(x^*)\leq \frac{b_1(x^*,\lambda)}{\frac{\partial f}{\partial v}(u_\lambda(x^*),\eta_1(x^*,\lambda))} U_\lambda(x^*)<0.
\end{equation}\end{linenomath*}

Note that $\frac{\partial f}{\partial v}(u_\lambda(x^*),\eta_1(x^*,\lambda))$ is uniformly bounded with respect to $\lambda$, i.e. there is $c>0$ such that
\begin{linenomath*}\begin{equation}
\label{f/v<c}
  0<\frac{\partial f}{\partial v}(u_\lambda(x^*),\eta_1(x^*,\lambda))<c,\quad \forall \lambda \in \mathbb{R}.
\end{equation}\end{linenomath*}
It follows from \eqref{b1toinfty}–\eqref{f/v<c} that there is $l_0\in (0,l_1)$ such that for all $l\in (0,l_0]$,
\begin{linenomath*}\begin{equation}
\label{b1>mu}
	\frac{b_1(x^*,\lambda)}{\frac{\partial f}{\partial v}(u_\lambda(x^*),\eta_1(x^*,\lambda))}>\mu.
\end{equation}\end{linenomath*}
Substituting \eqref{b1>mu} into \eqref{V<=U}, we obtain
\begin{linenomath*}$$V_\lambda(x^*)<\mu U_\lambda(x^*)<0.$$ \end{linenomath*}
This implies conclusion (c).

The proof of conclusion (d) is similar and we shall omit it.
\end{proof}

\begin{proof}[Proof of Theorem \ref{fczth1}]
	In the following we give the proof of  Theorem \ref{fczth1} through a three-step application of the direct method of moving planes procedure.
\setcounter{step}{0}
\begin{step}
\label{fczs1}
\textup{There exists $\lambda^*<0$ such that}
\begin{linenomath*}\begin{equation}
\label{u,v>0}
  U_{\lambda}(x)\geq 0\quad \textup{and} \quad V_{\lambda}(x)\geq 0,\quad \forall x \in H_{\lambda}.
\end{equation}\end{linenomath*}
\textup{for all} $\lambda\leq\lambda^*$.
\end{step}
Choose $\lambda^*<-R_0$, where $R_0$ is given by Lemma \ref{zul1}. We show that $U_{\lambda}(x)\geq 0$ and $V_{\lambda}(x)\geq 0$ in $H_{\lambda}$ for all $\lambda\leq \lambda^* $.

Assume for contradiction that there is a $\lambda \leq \lambda^*$ and a point $x^* \in H_{\lambda}$ such that $U_{\lambda}(x^*)<0$. Without loss of generality, we assume
\begin{linenomath*}$$U_\lambda(x^*)=\min_{x\in \overline{H_\lambda}}U_\lambda(x)<0.$$ \end{linenomath*}
Since $\lambda \leq \lambda^* < -R_0$, we know that $|x^*|>R_0$. An immediate consequence of Lemma \ref{zul1} is
\begin{linenomath*}\begin{equation}
\label{pro2v<0}
	V_\lambda(x^*)<\mu U_\lambda(x^*)<0.
\end{equation}\end{linenomath*}

From assumption (i), we know that $\lim_{|x|\to\infty } V_\lambda(x)= 0$. Moreover, $V_\lambda(x)= 0$, for $x \in T_\lambda$. Hence, there exists a point $y^* \in H_{\lambda}$ such that
\begin{linenomath*}$$V_\lambda(y^*)=\min_{y\in \overline{H_\lambda}}V_\lambda(y)<0.$$ \end{linenomath*}
In view of $|y^*|>R_0$, it follows from Lemma \ref{zul1} that
\begin{linenomath*}\begin{equation}
\label{pro2u<0}
	U_\lambda(y^*)<\nu V_\lambda(y^*)<0.
\end{equation}\end{linenomath*}
Combining \eqref{pro2v<0} and \eqref{pro2u<0}, we obtain
\begin{linenomath*}$$V_\lambda(x^*)<\mu U_\lambda(x^*) \leq \mu U_\lambda(y^*)<\mu\nu V_\lambda(y^*) \leq \mu\nu V_\lambda(x^*).$$ \end{linenomath*}
Noticing that $V_\lambda(x^*)<0$ and $\mu\nu>1$, we obtain a contradiction.
Hence for all $\lambda\leq\lambda^*$, we must have
\begin{linenomath*}$$U_{\lambda}(x)\geq 0, \quad V_{\lambda}(x)\geq 0, \quad \forall x \in H_{\lambda}.$$ \end{linenomath*}
This completes the proof of Step \ref{fczs1}.

We now move the hyperplane $T_\lambda$ to the right as long as \eqref{u,v>0} holds to its limiting position. Define
\begin{linenomath*}$$
\lambda_0=\sup\left\{\lambda\leq0 \mid U_\mu(x)\geq 0, V_\mu(x)\geq 0, \forall x\in H_\mu, \forall \mu\leq\lambda\right\}.
$$ \end{linenomath*}

Step \ref{fczs1} indicates that $\lambda_0>-\infty$. Obviously, $\lambda_0 \leq0$. Since all the functions we consider are continuous with respect to $\lambda$, we know that $U_{\lambda_0}(x)\geq 0$ and $V_{\lambda_0}(x)\geq 0$, for all $x\in H_{\lambda_0}$.

Before proceeding further, we shall investigate the properties of functions $U_{\lambda_0}(x)$ and $V_{\lambda_0}(x)$.

\setcounter{claim}{0}

\begin{claim}\label{fczc1}
If $U_{\lambda_0}(x)\equiv0$ in $H_{\lambda_0}$, then $V_{\lambda_0}(x)\equiv 0$ in $H_{\lambda_0}$. If $V_{\lambda_0}(x)\equiv0$ in $H_{\lambda_0}$, then $U_{\lambda_0}(x)\equiv 0$ in $H_{\lambda_0}$.
\end{claim}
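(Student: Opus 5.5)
Assume $U_{\lambda_0}\equiv 0$ in $H_{\lambda_0}$; by antisymmetry $U_{\lambda_0}\equiv 0$ on all of $\mathbb{R}^n$. The plan is to read the first equation of the linearized system at points of $\Sigma_{\lambda_0}$, where the left-hand side now vanishes, and use the sign condition (iv) to force $V_{\lambda_0}=0$ there. The Neumann identity \eqref{nsu=h} together with (v) will then handle the exterior part $\{x_n<0\}$.

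\emph{Step 1: $\Sigma_{\lambda_0}$.} For $x\in\Sigma_{\lambda_0}$ we have $(-\Delta)^{\frac{\alpha}{2}}U_{\lambda_0}(x)=0$. Hence \eqref{-delu=f} reduces to
$$0=\frac{\partial f}{\partial v}(u_{\lambda_0}(x),\eta_1(x,\lambda_0))V_{\lambda_0}(x).$$
To avoid trouble with the mean value points, I will write this directly as $f(u(x),v_{\lambda_0}(x))-f(u(x),v(x))=0$, using $u_{\lambda_0}=u$. By (iv), $v\mapsto f(u(x),v)$ is strictly increasing on $\mathbb{R}^+$, and $u,v>0$. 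Therefore $v_{\lambda_0}(x)=v(x)$, i.e.\ $V_{\lambda_0}\equiv 0$ on $\Sigma_{\lambda_0}$. By continuity this extends to $H_{\lambda_0}\cap\{x_n=0\}$.

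\emph{Step 2: $H_{\lambda_0}\cap\{x_n<0\}$.} The cleanest route uses the Neumann identity. For $x_n<0$, $\mathcal{N}_{\frac{\alpha}{2}}U_{\lambda_0}(x)=0$ because $U_{\lambda_0}\equiv 0$ everywhere. So $h(u(x),v_{\lambda_0}(x))=h(u(x),v(x))$, and strict monotonicity of $h$ in $v$ from (v) gives $V_{\lambda_0}(x)=0$. This uses only the positivity of $u,v$, so $(u,v)\in\mathbb{R}^+\times\mathbb{R}^+$ as (v) requires.

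As a backup, I would argue as in Claim~\ref{claim1} instead. Since $V_{\lambda_0}\ge 0$ in $H_{\lambda_0}$ and vanishes on $\Sigma_{\lambda_0}$, any exterior point would be handled by the same kernel comparison. Alternatively, pick $x\in\Sigma_{\lambda_0}$ and compute
$$0=(-\Delta)^{\frac{\alpha}{2}}V_{\lambda_0}(x)=C_{n,\alpha}\,P.V.\int_{H_{\lambda_0}}\Bigl(\frac{1}{|x-y^{\lambda_0}|^{n+\alpha}}-\frac{1}{|x-y|^{n+\alpha}}\Bigr)V_{\lambda_0}(y)\,dy.$$
The left side is zero because of \eqref{-delv=g} with $U_{\lambda_0}=V_{\lambda_0}(x)=0$. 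The integrand is nonpositive, so it must vanish identically, which gives $V_{\lambda_0}\equiv 0$ on $H_{\lambda_0}$ as well. Either route closes the first implication.

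\emph{Converse.} The reverse implication, $V_{\lambda_0}\equiv 0\Rightarrow U_{\lambda_0}\equiv 0$, is symmetric. Use \eqref{-delv=g} with $\partial g/\partial u>0$ from (iv), together with \eqref{nsv=l} with $\partial l/\partial u>0$ from (v).

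\emph{Main obstacle.} The only delicate point is justifying the use of strict monotonicity along the mean value points. Writing the differences with one argument frozen, as above, avoids this. I expect no other obstacle.
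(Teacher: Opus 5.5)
Your proposal is correct and follows the same route as the paper. The paper reads \eqref{-delu=f} on $\Sigma_{\lambda_0}$, extending to $\{x_n=0\}$ by continuity, and \eqref{nsu=h} on $H_{\lambda_0}\cap\{x_n<0\}$; strict monotonicity from (iv) and (v) then forces $V_{\lambda_0}=0$, and the converse is symmetric. Writing the difference as $f(u,v_{\lambda_0})-f(u,v)$ rather than evaluating $\partial f/\partial v$ at the mean-value point is only a cosmetic difference, and your backup kernel-comparison argument is not needed.
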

\begin{proof}
If $U_{\lambda_0}(x)\equiv0$ in $H_{\lambda_0}$, then for every $x\in \Sigma_{\lambda_0}$, equation \eqref{-delu=f} yields
\begin{linenomath*}$$0=(-\Delta)^\frac{\alpha}{2}U_{\lambda_0}(x)=\frac{\partial f}{\partial v}(u_{\lambda_0}(x),\eta_1(x,\lambda_0))V_{\lambda_0}(x).$$ \end{linenomath*}
For points $x\in H_{\lambda_0}\cap\{x_n=0\}$, since $(u,v)\in (C_{loc}^{1,\theta}(\mathbb{R}^n))^2$, the operator $(-\Delta)^{\alpha/2}$ is continuous on $\mathbb{R}^n$, so \eqref{-delu=f} extends to $\{x_n=0\}$ by continuity and the same identity remains valid. For points $x\in H_{\lambda_0}\cap\{x_n<0\}=\mathbb{R}^n\setminus\overline{\mathbb{R}^n_+}$, equation \eqref{nsu=h} gives
\begin{linenomath*}$$0=\mathcal{N}_{\frac{\alpha}{2}}U_{\lambda_0}(x)=\frac{\partial h}{\partial v}(u_{\lambda_0}(x),\eta_3(x,{\lambda_0}))V_{\lambda_0}(x).$$ \end{linenomath*}
Therefore, by assumptions (iv) and (v), we obtain $V_{\lambda_0}(x)\equiv 0$ in $H_{\lambda_0}$. The proof of the second conclusion is analogous, using \eqref{-delv=g} in $H_{\lambda_0}\cap\{x_n\geq 0\}$ and \eqref{nsv=l} in $H_{\lambda_0}\cap\{x_n<0\}$.
\end{proof}

\begin{claim}\label{fczc2}
If $U_{\lambda_0}(x)\not\equiv0$ or $V_{\lambda_0}(x)\not\equiv0$ for $x\in H_{\lambda_0}$, then $U_{\lambda_0}(x)>0$ and $V_{\lambda_0}(x)>0$, for all $x\in H_{\lambda_0}$.
\end{claim}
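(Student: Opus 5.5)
We argue by contraposition, mirroring Claim~\ref{claim1} of Section~\ref{sec:eq}. Suppose $U_{\lambda_0}\not\equiv 0$ or $V_{\lambda_0}\not\equiv 0$ in $H_{\lambda_0}$. By Claim~\ref{fczc1}, this forces both $U_{\lambda_0}\not\equiv0$ and $V_{\lambda_0}\not\equiv0$. We already know $U_{\lambda_0},V_{\lambda_0}\geq 0$ in $H_{\lambda_0}$. Assume, for contradiction, that there is $x_0\in H_{\lambda_0}$ with $U_{\lambda_0}(x_0)=0$; the case $V_{\lambda_0}(x_0)=0$ is symmetric, using \eqref{-delv=g} and \eqref{nsv=l}. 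Then $x_0$ is a minimum point of $U_{\lambda_0}$ on $H_{\lambda_0}$. We split according to the sign of $x_{0,n}$, as in Claim~\ref{claim1}.

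\emph{Case $x_{0,n}\geq 0$.} The equation \eqref{-delu=f} holds at $x_0$; when $x_{0,n}=0$ it extends by continuity of $(-\Delta)^{\alpha/2}$ on $\mathbb{R}^n$. Since $U_{\lambda_0}(x_0)=0$, it gives
$$(-\Delta)^{\frac{\alpha}{2}}U_{\lambda_0}(x_0)=\frac{\partial f}{\partial v}(u_{\lambda_0}(x_0),\eta_1)\,V_{\lambda_0}(x_0)\geq 0,$$
using (iv) and $V_{\lambda_0}\geq0$. On the other hand, the antisymmetric computation \eqref{cl1w<=0} yields
$$(-\Delta)^{\frac{\alpha}{2}}U_{\lambda_0}(x_0)=C_{n,\alpha}\,P.V.\int_{H_{\lambda_0}}\left(\frac{1}{|x_0-y^{\lambda_0}|^{n+\alpha}}-\frac{1}{|x_0-y|^{n+\alpha}}\right)U_{\lambda_0}(y)\,dy\leq 0.$$
The kernel is strictly negative on $H_{\lambda_0}$ and $U_{\lambda_0}\geq0$ with $U_{\lambda_0}\not\equiv0$ on a set of positive measure: by continuity, nonvanishing at one point implies nonvanishing on an open set. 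Hence the integral is strictly negative, contradicting the first display.

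\emph{Case $x_{0,n}<0$.} Now we use \eqref{nsu=h}:
$$\mathcal{N}_{\frac{\alpha}{2}}U_{\lambda_0}(x_0)=\frac{\partial h}{\partial v}\,V_{\lambda_0}(x_0)\geq 0,$$
by (v). The computation \eqref{cl1Ns<=0} gives
$$\mathcal{N}_{\frac{\alpha}{2}}U_{\lambda_0}(x_0)\leq C_{n,\alpha}\int_{\Sigma_{\lambda_0}}\left(\frac{1}{|x_0-y^{\lambda_0}|^{n+\alpha}}-\frac{1}{|x_0-y|^{n+\alpha}}\right)U_{\lambda_0}(y)\,dy\leq0.$$
Therefore equality holds throughout. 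Equality in the second display forces $U_{\lambda_0}\equiv0$ on $\Sigma_{\lambda_0}$. Picking any $\bar x\in\Sigma_{\lambda_0}$, we have $U_{\lambda_0}(\bar x)=0$ with $\bar x_n>0$, and the first case applies to give a contradiction.

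Hence $U_{\lambda_0}>0$ in $H_{\lambda_0}$, and symmetrically $V_{\lambda_0}>0$. The main point needing care is the strictness in the first case. It relies on knowing $U_{\lambda_0}\not\equiv 0$, which is exactly where Claim~\ref{fczc1} enters, and on the strict negativity of the kernel difference for $y\in H_{\lambda_0}$. In the Neumann case, the subtlety is that \eqref{cl1Ns<=0} only integrates over $\Sigma_{\lambda_0}$, so we cannot conclude directly there. Instead we first deduce vanishing on $\Sigma_{\lambda_0}$ and then reduce to the interior case.
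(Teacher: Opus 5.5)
Your proof is correct and follows the paper's argument: assume $U_{\lambda_0}$ vanishes at some $x_0\in H_{\lambda_0}$, split by the sign of $x_{0,n}$, and compare the equation (using (iv) or (v) with $V_{\lambda_0}\geq0$) against the antisymmetric integral representation, with Claim~\ref{fczc1} ensuring that both $U_{\lambda_0}$ and $V_{\lambda_0}$ are nontrivial. The one real difference is the exterior case $x_{0,n}<0$. There the paper asserts the strict inequality directly, although the integral runs only over $\Sigma_{\lambda_0}$ and so strictness would need $U_{\lambda_0}\not\equiv0$ on $\Sigma_{\lambda_0}$. Your reduction is the more careful justification: you first deduce $U_{\lambda_0}\equiv0$ on $\Sigma_{\lambda_0}$ and then apply the interior case at a point of $\Sigma_{\lambda_0}$, exactly as in the proof of Claim~\ref{claim1} in Section~\ref{sec:eq}.
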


\begin{proof}
If $U_{\lambda_0}(x)\not\equiv0$ in $H_{\lambda_0}$ and we already know $U_{\lambda_0}(x)\geq0$ for all $x\in H_{\lambda_0}$, due to the continuity of $U_\lambda(x)$ with respect to $\lambda$, our aim is to further demonstrate that $U_{\lambda_0}(x)>0$, $\forall x\in H_{\lambda_0}$. To prove this, we assume for contradiction that there exists a point $x^*\in H_{\lambda_0}$ such that
\begin{linenomath*}$$
U_{\lambda_0}(x^*)=0.
$$ \end{linenomath*}

If $x^*_n\geq 0$, since $(u,v)\in (C^{1,\theta}_{loc}(\mathbb{R}^n))^2$, equation \eqref{-delu=f} holds at $x^*$ (extending to $x^*_n=0$ by continuity of $(-\Delta)^{\alpha/2}$). Employing the same computation as in \eqref{cl1w<=0}, we can establish
\begin{linenomath*}\begin{align}\label{eq:Du-neg}
	(-\Delta)^\frac{\alpha}{2}U_{\lambda_0}(x^*)=C_{n,\alpha}P.V \int_{H_{\lambda_0}}\left(\frac{1}{|x^*-y^{\lambda_0}|^{n+{\alpha}}}-\frac{1}{|x^*-y|^{n+{\alpha}}}\right)U_{\lambda_0}(y)dy<0.
\end{align}\end{linenomath*}
While it is deduced from \eqref{-delu=f} and assumption \textit{(iv)} that
\begin{linenomath*}$$(-\Delta)^\frac{\alpha}{2}U_{\lambda_0}(x^*)=\frac{\partial f}{\partial v}(u_{\lambda_0}(x^*),\eta_1(x^*,\lambda_0))V_{\lambda_0}(x^*)\geq0,$$ \end{linenomath*}
this contradicts with \eqref{eq:Du-neg}.

If $x^*_n<0$, i.e., $x^*\in\mathbb{R}^n\setminus\overline{\mathbb{R}^n_+}$, by using \eqref{cl1Ns<=0}, we immediately deduce
\begin{linenomath*}\begin{align*}
	\mathcal{N}_{\frac{\alpha}{2}}U_{\lambda_0}(x^*)=C_{n,\alpha} \int_{\Sigma_{\lambda_0}}\left(\frac{1}{|x^*-y^{\lambda_0}|^{n+{\alpha}}}-\frac{1}{|x^*-y|^{n+{\alpha}}}\right)U_{\lambda_0}(y)dy<0.
\end{align*}\end{linenomath*}
However, according to \eqref{nsu=h} and assumption \textit{(v)}, we know that 
\begin{linenomath*}$$\mathcal{N}_{\frac{\alpha}{2}}U_\lambda(x^*)=\frac{\partial h}{\partial v}(u_{\lambda_0}(x^*),\eta_3(x^*,{\lambda_0}))V_{\lambda_0}(x^*)\geq0.$$ \end{linenomath*}
This is a contradiction. 

As a result, we establish $U_{\lambda_0}(x)>0$, for all $x\in H_{\lambda_0}$. Given this situation, Claim \ref{fczc1} implies that $V_{\lambda_0}(x)\not\equiv0$. By employing a similar argument, it can be demonstrated that $V_{\lambda_0}(x)>0$, for all $x\in H_{\lambda_0}$.
\end{proof}

We now move forward to prove Theorem \ref{fczth1}.

\begin{step}
\label{fczs2}
\textup{If} $\lambda_0<0$, \textup{then}
\begin{linenomath*}$$U_{\lambda_0}(x)\equiv0,\quad V_{\lambda_0}(x)\equiv0,\quad \forall x\in H_{\lambda_0}.$$ \end{linenomath*}
\end{step}

Referring to Claim \ref{fczc2}, our goal is to eliminate the possibility where both $U_{\lambda_0}$ and $V_{\lambda_0}$ are strictly positive in $H_{\lambda_0}$. Let us suppose, for the sake of argument, that this scenario holds:
\begin{linenomath*}\begin{equation}
\label{thu>0,v>0}
U_{\lambda_0}(x)>0,\quad V_{\lambda_0}(x)>0,\quad \forall x\in H_{\lambda_0}.
\end{equation}\end{linenomath*}

Considering the definition of $\lambda_0$ (which is assumed to be less than $0$), we can identify sequences
$\{\lambda_k\}^\infty_{k=1}$ and $\{x^k\}^\infty_{k=1}$ such that
\begin{linenomath*}\begin{equation}
\label{sequence}
\lambda_0<\lambda_{k+1}<\lambda_k<0,\quad k=1,2,\ldots; \quad \lim_{k\to\infty}\lambda_k=\lambda_0;
\end{equation}\end{linenomath*}
$x^k\in H_{\lambda_k}$, and either $U_{\lambda_k}(x^k)<0$ or $V_{\lambda_k}(x^k)<0$. Without loss of generality, let us assume $U_{\lambda_k}(x^k)<0$ (up to a sub-sequence) and that $x^k$ corresponds to the minimum points, i.e.
\begin{linenomath*}\begin{equation}
\label{minuxk}
U_{\lambda_k}(x^k)=\min_{x\in \overline{H_k}}U_{\lambda_k}(x)<0,\quad k=1,2,\ldots.
\end{equation}\end{linenomath*}

There are two possible cases.

\begin{case}
The sequence $\{x^k\}^\infty_{k=1}$ contains a bounded sub-sequence.
\end{case}

Without loss of generality, we assume
\begin{linenomath*}\begin{equation}
\label{limxk}
\lim_{k\to\infty}x^k=x^*.
\end{equation}\end{linenomath*}
From \eqref{thu>0,v>0} \eqref{sequence} and \eqref{minuxk}, it is evident that
\begin{linenomath*}$$x^*\in \cap_{k=1}^{+\infty}H_{\lambda_0}=\overline{H_{\lambda_0}},\quad U_{\lambda_0}(x^*)=0.$$ \end{linenomath*}
Given that $U_{\lambda_0}(x)>0$ for all $x\in H_{\lambda_0}$, it implies that $x^*$ must lie on the boundary of $ H_{\lambda_0}$, i.e. $x^*\in T_{\lambda_0}$. Taking \eqref{sequence} and \eqref{limxk} into account, for $l_0>0$ defined by Lemma \ref{zul2}, we can pick $K_1>0$ such that if $k>K_1$, then $\lambda_0<\lambda_k<\lambda_0+l_0/2<0$ and
\begin{linenomath*}$$
x^k\in \Omega_{\lambda_k+l_0/2,l_0}.
$$ \end{linenomath*}
Here $l_0$ is the strip width threshold provided by Lemma~\ref{zul2}, such that conclusions (c)--(d) hold for all $l\in(0,l_0]$.

According to Lemma \ref{zul2}, we have
\begin{linenomath*}\begin{equation}
\label{step2v<u}
V_{\lambda_k}(x^k)<\mu U_{\lambda_k}(x^k)<0.
\end{equation}\end{linenomath*}

It follows that there is $y^k\in H_{\lambda_k}$, such that
\begin{linenomath*}$$V_{\lambda_k}(y^k)=\min_{y\in \overline{H_{\lambda_k}}}V_{\lambda_k}(y)<0,\quad k=K_1+1,K_1+2,\ldots.$$ \end{linenomath*}
For sequence $\{y^k\}^\infty_{K_1+1}$, there are also two possible cases.

\begin{subcase}\label{case:bb}
$\{y^k\}^\infty_{K_1+1}$ has a bounded sub-sequence.
\end{subcase}

In this case, we also denote the convergent sub-sequence as $y^k$, i.e. $\lim_{k\to\infty}y^k=y^*$. Consequently, we have
$V_{\lambda_0}(y^*)= 0$, implying that $y^*\in T_{\lambda_0}$. Therefore, there is $K>K_1$, such that for $k>K$, $y^k\in \Omega_{\lambda_k+l_0/2,l_0}$. Applying Lemma \ref{zul2},
we obtain
\begin{linenomath*}\begin{equation}
\label{step2u<v}
U_{\lambda_k}(y^k)<\nu V_{\lambda_k}(y^k)<0.
\end{equation}\end{linenomath*}

By combining \eqref{step2u<v} with \eqref{step2v<u}, for $k>K$,  we get 
\begin{linenomath*}$$U_{\lambda_k}(y^k)<\nu V_{\lambda_k}(y^k)\leq \nu V_{\lambda_k}(x^k)<\mu\nu U_{\lambda_k}(x^k)\leq \mu\nu U_{\lambda_k}(y^k),$$ \end{linenomath*}
resulting in a contradiction, given that $U_{\lambda_k}(y^k)<0$.

\begin{subcase}\label{case:bub}
$\lim_{k\to\infty}|y^k|=\infty$.
\end{subcase}

In this case, there is $K>K_1$, such that for all $k>K$, $|y^k|>R_0$, where $R_0$ is chosen as in Lemma \ref{zul1}. It follows from Lemma \ref{zul1} that
\begin{linenomath*}\begin{equation}
\label{case1.2}
U_{\lambda_k}(y^k)<\nu V_{\lambda_k}(y^k)<0,\quad \forall k>K.
\end{equation}\end{linenomath*}

By combining \eqref{case1.2} with \eqref{step2v<u}, for $k>K$, we find 
\begin{linenomath*}$$U_{\lambda_k}(y^k)<\nu V_{\lambda_k}(y^k)\leq \nu V_{\lambda_k}(x^k)<\mu\nu U_{\lambda_k}(x^k)\leq \mu\nu U_{\lambda_k}(y^k),$$ \end{linenomath*}
which leads to a contradiction, as $U_{\lambda_k}(y^k)<0$.

\begin{case}
$\lim_{k\to\infty}|x^k|=\infty$.
\end{case}

In this situation, we can find $K_2>0$ such that for $k>K_2$, it holds that $|x^k|>R_0$, where $R_0$ is chosen as in Lemma \ref{zul1}.  It follows from Lemma \ref{zul1} that
\begin{linenomath*}\begin{equation}
\label{case2}
V_{\lambda_k}(x^k)<\mu U_{\lambda_k}(x^k)<0,\quad \forall k>K_2.
\end{equation}\end{linenomath*}
Hence, for each $k>K_2$, there is $y^k\in H_{\lambda_k}$
satisfying
\begin{linenomath*}$$V_{\lambda_k}(y^k)=\min_{y\in \overline{H_{\lambda_k}}}V_{\lambda_k}(y)<0,\quad \forall k>K_2.$$ \end{linenomath*}

There are two possible cases.

\begin{subcase}
$\{y^k\}^\infty_{k=1}$ has a bounded sub-sequence.
\end{subcase}

By a similar argument as in Case \ref{case:bub}, we will find a contradiction in this case.

\begin{subcase}
$\lim_{k\to\infty}|y^k|=\infty$.
\end{subcase}

Choosing $K>K_2$ such that for $k>K$, $|y^k|>R_0$, where $R_0$ is determined by Lemma \ref{zul1}. Applying Lemma \ref{zul1}, we then have
\begin{linenomath*}\begin{equation}
\label{case2.2}
U_{\lambda_k}(y^k)<\nu V_{\lambda_k}(y^k)<0,\quad \forall k>K.
\end{equation}\end{linenomath*}

Hence, combining \eqref{case2} and \eqref{case2.2}, when $k>K$, we have 
\begin{linenomath*}$$U_{\lambda_k}(y^k)<\nu V_{\lambda_k}(y^k)\leq \nu V_{\lambda_k}(x^k)<\mu\nu U_{\lambda_k}(x^k)\leq \mu\nu U_{\lambda_k}(y^k),$$ \end{linenomath*}
resulting in a contradiction, as $U_{\lambda_k}(y^k)<0$.

In conclusion, if $\lambda_0<0$, we have
\begin{linenomath*}$$U_{\lambda_0}(x)\equiv0,\quad V_{\lambda_0}(x)\equiv0,\quad \forall x\in H_{\lambda_0}.$$ \end{linenomath*}

\begin{step}
The solution $(u,v)$ is axially symmetric about a line parallel to the $x_n$-axis.
\end{step}

First, by a similar argument as in Step \ref{step3} of the previous section, we deduce that $(u,v)$ is symmetric with respect to some hyperplane $\{x \in \mathbb{R}^n \mid x_1=c\}$. Since the $x_1$ direction can be chosen arbitrarily, we have actually shown that $(u,v)$ is axially symmetric about a line parallel to the $x_n$-axis. 
\end{proof}


\end{document}